\newtheorem{theorem}{Theorem}[section]
\newtheorem{corollary}[theorem]{Corollary}
\newtheorem{lemma}[theorem]{Lemma}
\newtheorem{proposition}[theorem]{Proposition}
\theoremstyle{definition}
\newtheorem{question}[theorem]{Question}
\numberwithin{equation}{subsection}
\begin{document}
\title{On 3-strand singular pure braid group}

\author{Valeriy G. Bardakov}
\author{Tatyana A. Kozlovskaya}

\address{Sobolev Institute of Mathematics, 4 Acad. Koptyug avenue, 630090, Novosibirsk, Russia.}
\address{Novosibirsk State  University, 2 Pirogova Street, 630090, Novosibirsk, Russia.}
\address{Novosibirsk State Agrarian University, Dobrolyubova street, 160, Novosibirsk, 630039, Russia.}
\address{Tomsk State University, pr. Lenina, 36, Tomsk, 634050, Russia.}
\email{bardakov@math.nsc.ru}

\address{Regional Scientific and Educational Mathematical Center of Tomsk State University, pr. Lenina, 36, Tomsk, 634050, Russia.}
\email{t.kozlovskaya@math.tsu.ru}

\subjclass[2010]{Primary 57M25; Secondary 20E26, 57M05, 20N05}
\keywords{Braid group, monoid of singular braids, singular pure braid group.}

\begin{abstract}
In the present paper we study  the singular pure braid group $SP_{n}$ for $n=2, 3$. We find generators, defining relations and  the algebraical structure of these groups. In particular, we prove that  $SP_{3}$ is a semi-direct product $SP_{3} = \widetilde{V}_3 \leftthreetimes \mathbb{Z}$, where $\widetilde{V}_3$  is an HNN-extension with base group $\mathbb{Z}^2 * \mathbb{Z}^2$ and cyclic associated subgroups. We prove that the center $Z(SP_3)$ of $SP_3$  is a direct factor in $SP_3$.
\end{abstract}

\maketitle

\section{Introduction}

E. Artin  introduced the braid group in 1925 and gave a presentation for this group in 1947  \cite{A}. The braid groups have applications in a wide variety of areas of mathematics, physics and biology. The notion of singular braids was introduced independently by John Baez \cite{Baez} and Joan Birman \cite{Bir}. It was shown that such braids form a monoid $SB_n$ which contains the Artin braid group $B_n$. The research of singular knots has been mostly motivated by the study of Vassiliev invariants. This led to increase of research activities in knot theory at the time. Singular braids and singular knots have been investigated by many scientifics \cite{C1}-\cite{JL}, \cite{Z}.

The singular braid group $SG_n$ was introduced by F.~Fenn, E.~Keyman and C.~Rourke \cite{FKR}. They proved that the singular braid  monoid  $SB_n$ is embedded into the group $SG_n$.   The word problem for the  singular braid monoid with three strings was solved by A.~Jarai \cite{J} (see also \cite{DG1}). R. Corran solved the word \cite{C1} and conjugacy problems for singular Artin monoid \cite{C2}. L. Paris proved Birman's conjecture that the desingularization map  $\eta $: $S{B_n} \to \mathbb{Z}\left[ {{B_n}} \right]$ is injective  \cite{Par} . This also gives a solution for the word problem in the  singular braid monoid.
 O.~Dasbach and B.~Gemein \cite{DG} introduced the singular pure braid group that is a generalization of the pure braid group $P_n$, found the set of generators and defining relations for this group and established that this group can be constructed using successive HNN-extensions. Investigation of homological properties of singular braids was started in \cite{V1}. We refer to \cite{V2,V3}  for more details on singular braid groups and also other generalized braid groups.

Monoid and group of pseudo braids was presented  in \cite{BJW}  and proved that it is isomorphic to monoid and group of singular braids, respectively.

The paper is organized as follows. In Section \ref{prelim} we give basic definitions of braid group  $B_n$, pure braid group $P_n$ and singular braid monoid $SB_n$, also we recall the Reidemeister - Schreier method. In Section \ref{pure} we find a presentation of a singular pure braid group $SP_2$ and $SP_3$, using the Reidemeister - Schreier method. Usually properties of braid groups with small number of strings are different from the general case (see for example  \cite{BMVW} and \cite{DG1}). In another generating set for $SP_3$ was found in \cite{DG}.
Since $SP_3$ is normal in $SG_3$, conjugations by generator of $SG_3$ induce automorphisms of $SP_3$. We find these automorphisms in Proposition \ref{p4.1}.
 Then we study the structure of $SP_{3}$ (see Theorem \ref{th}) and prove that $SP_{3}$ is a semi-direct product $SP_{3} = \widetilde{V}_3 \leftthreetimes \mathbb{Z}$, where $\widetilde{V}_3$  is an HNN-extension with base group $\mathbb{Z}^2 * \mathbb{Z}^2$ and cyclic associated subgroups. Also, we  establish that the center $Z(SG_3) = Z(SP_3)$ is a direct factor in $SP_3$ (see Theorem~\ref{t3} and Corollary~\ref{c3}).

\section{Basic definitions}\label{prelim}

\subsection{Artin braid groups}\label{Artin }

The braid group $B_n$, $n\geq 2$, on $n$ strings can be defined as
a group generated by $\sigma_1,\sigma_2,\ldots,\sigma_{n-1}$ with the defining relations
\begin{center}
$\sigma_i \, \sigma_{i+1} \, \sigma_i = \sigma_{i+1} \, \sigma_i \, \sigma_{i+1},~~~ i=1,2,\ldots,n-2, $
\end{center}
\begin{center}
$\sigma_i \, \sigma_j = \sigma_j \, \sigma_i,~~~|i-j|\geq 2. $
\end{center}

There exists a homomorphism of $B_n$ onto the symmetric group $S_n$ on
$n$ letters. This homomorphism  maps
 $\sigma_i$ to the transposition  $(i,i+1)$, $i=1,2,\ldots,n-1$.
The kernel of this homomorphism is called the
{\it pure braid group} and denoted by
$P_n$. The group $P_n$ is generated by  $a_{ij}$, $1\leq i < j\leq n$.
These
generators can be expressed by the generators of
 $B_n$ as follows
$$
a_{i,i+1}=\sigma_i^2,
$$
$$
a_{ij} = \sigma_{j-1} \, \sigma_{j-2} \ldots \sigma_{i+1} \, \sigma_i^2 \, \sigma_{i+1}^{-1} \ldots
\sigma_{j-2}^{-1} \, \sigma_{j-1}^{-1},~~~i+1< j \leq n.
$$

The subgroup $P_n$ is normal in $B_n$, and the quotient $B_n / P_n$ is the symmetric group $S_n$. The generators of $B_n$ act on the generator $a_{ij} \in P_n$ by the rules:
 \begin{center}
$\sigma_k^{-1} a_{ij} \sigma_k =  a_{ij},  ~\mbox{for}~k \not= i-1, i, j-1, j,$ \\
$\sigma_{i}^{-1} a_{i,i+1} \sigma_{i} =  a_{i,i+1}, $  \\
$ \sigma_{i-1}^{-1} a_{ij} \sigma_{i-1} =   a_{i-1,j},$   \\
$ \sigma_{i}^{-1} a_{ij} \sigma_{i} =  a_{i+1,j} [a_{i,i+1}^{-1}, a_{ij}^{-1}],  ~\mbox{for}~j \not= i+1, $\\
$ \sigma_{j-1}^{-1} a_{ij} \sigma_{j-1} =  a_{i,j-1},$   \\
 $\sigma_{j}^{-1} a_{ij} \sigma_{j} =  a_{ij} a_{i,j+1} a_{ij}^{-1},$
\end{center}
where $[a, b] = a^{-1} b^{-1} a b = a^{-1} a^b$.

Denote by
$$
U_{i} = \langle a_{1i}, a_{2i}, \ldots, a_{i-1,i} \rangle,~~~i = 2, \ldots, n,
$$
a subgroup of $P_n$.
It is known that $U_i$ is a free group of rank $i-1$. The pure braid group  $P_n$ is defined by the relations  (for $\varepsilon = \pm 1$):
 \begin{center}
$a_{ik}^{-\varepsilon} a_{kj}  a_{ik}^{\varepsilon} = (a_{ij} a_{kj})^{\varepsilon} a_{kj} (a_{ij} a_{kj})^{-\varepsilon}, $ \\
$ a_{km}^{-\varepsilon} a_{kj}  a_{km}^{\varepsilon} = (a_{kj} a_{mj})^{\varepsilon} a_{kj} (a_{kj} a_{mj})^{-\varepsilon},  ~\mbox{for}~m < j, $ \\
$ a_{im}^{-\varepsilon} a_{kj}  a_{im}^{\varepsilon} = [a_{ij}^{-\varepsilon}, a_{mj}^{-\varepsilon}]^{\varepsilon} a_{kj} [a_{ij}^{-\varepsilon}, a_{mj}^{-\varepsilon}]^{-\varepsilon},  ~\mbox{for}~i < k < m, $ \\
$a_{im}^{-\varepsilon} a_{kj} a_{im}^{\varepsilon} = a_{kj},  ~\mbox{for}~k < i < m < j ~\mbox{or}~  m < k. $
 \end{center}

The group $P_n$ is the semi--direct product of  the normal subgroup
$U_n$ and $P_{n-1}$. Similarly, $P_{n-1}$ is the semi--direct product of the free group
$U_{n-1}$  and $P_{n-2},$ and so on.
Therefore, $P_n$ is decomposable (see \cite{Mar}) into the following semi--direct product
$$
P_n=U_n\rtimes (U_{n-1}\rtimes (\ldots \rtimes
(U_3\rtimes U_2))\ldots),~~~U_i\simeq F_{i-1}, ~~~i=2,3,\ldots,n.
$$

\subsection{Singular braid groups}\label{Singular }

{\it The Baez--Birman monoid}
\cite{Baez, Bir} or {\it the singular braid monoid} $SB_n$ is generated
(as a monoid) by elements $\sigma_i,$ $\sigma_i^{-1}$, $\tau_i$, $i = 1, 2, \ldots, n-1$.
The elements $\sigma_i,$ $\sigma_i^{-1}$  generate the braid group
$B_n$. The generators $\tau_i$  satisfy the defining relations
\begin{center}
$\tau_i \, \tau_j = \tau_j \, \tau_i, ~~~|i - j| \geq 2, $
\end{center}
other relations are mixed:
\begin{center}
$\tau_{i} \, \sigma_{j} = \sigma_{j} \, \tau_{i}, ~~~|i - j| \geq 2, $
\end{center}
\begin{center}
$\tau_{i}  \, \sigma_{i} = \sigma_{i} \, \tau_{i},~~~ i=1,2,\ldots,n-1,  $
\end{center}
\begin{center}
$\sigma_{i} \, \sigma_{i+1} \, \tau_i = \tau_{i+1} \, \sigma_{i}  \, \sigma_{i+1},~~~ i=1,2,\ldots,n-2,$
 \end{center}
 \begin{center}
$\sigma_{i+1}  \, \sigma_{i} \, \tau_{i+1} = \tau_{i} \,
\sigma_{i+1} \, \sigma_{i}, ~~~ i=1,2,\ldots,n-2.$
\end{center}

In the work \cite{FKR} it was proved that the singular braid monoid $SB_n$ is embedded into
the group $SG_n$ which is called the  {\it
singular braid group} and has the same defining relations as $SB_n$.

$SB_n$ is generated by the unit, by the classical elementary braids $\sigma_i$ with their inverses and by the corresponding elementary singular braids $\tau_i$ (view Figure~\ref{figure}.):

\begin{figure}[h]
\centering{
\includegraphics[totalheight=4.cm]{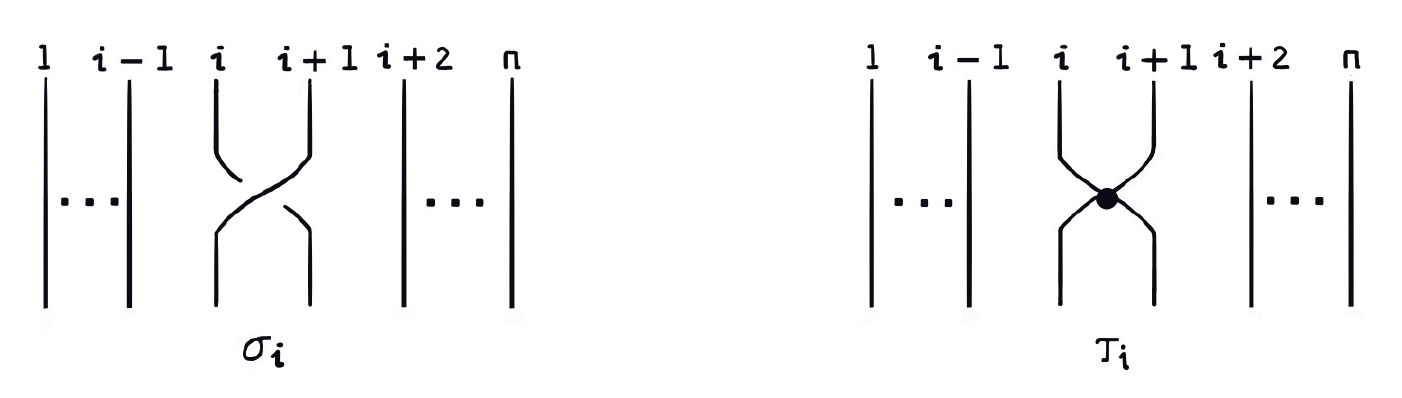}
\caption{The elementary braids $\sigma_i$ and $\tau_i$  .} \label{figure}
}
\end{figure}

\subsection{Singular pure braid group}\label{pure}

Define the map
$$
\pi : SG_n \longrightarrow S_n
$$
of $SG_n$ onto the symmetric group $S_n$ on $n$ symbols by actions the on generators
$$
\pi(\sigma_i) = \pi(\tau_i) = (i, i+1), ~~~ i = 1, 2, \ldots, n-1.
$$
The kernel $\mbox{ker}(\pi)$ of this map is called the
{\it singular pure braid group} and denoted by $SP_n$ (see \cite{DG}).
It is clear that $SP_n$ is a normal subgroup of index $n!$ of $SG_n$ and we have the short exact sequence
$$
1 \to SP_n \to SG_n \to S_n \to 1.
$$

To find a presentation of $SP_n$  we will use
the Reidemeister--Schreier method (see, for example, \cite[Ch. 2.2]{KMS}).

\subsection{ Reidemeister - Schreier method}\label{method}

Let $m_{kl} = \rho_{k-1}  \, \rho_{k-2} \ldots \rho_l$ for $l < k$ and $m_{kl} = 1$
in other cases. Then the set
$$
\Lambda_n = \left\{ \prod\limits_{k=2}^n m_{k,j_k} \vert 1 \leq j_k
\leq k \right\}
$$
is a Schreier set of coset representatives of $SP_n$ in $SG_n$.

Define the map $^- : SG_n \longrightarrow \Lambda_n$ which takes an element
$w \in SG_n$
into the representative $\overline{w}$ from $\Lambda_n$. In this case the element
$w \overline{w}^{-1}$ belongs to $SP_n$. By Theorem 2.7 from  \cite{KMS}
the group $SP_n$ is generated by
$$
S_{\lambda, a} = \lambda a \cdot (\overline{\lambda a})^{-1},
$$
where $\lambda$ runs over the set $\Lambda_n$ and $a$ runs over the set of generators of
$SG_n$.

To find  defining relations of $SP_n$ we define
a rewriting process $\tau $. It allows us to rewrite a word which is written in the generators
of $SG_n$ and presents an element in $SP_n$ as a word in the generators of $SP_n$.
Let us associate to the reduced word
$$
u = a_1^{\varepsilon_1} \, a_2^{\varepsilon_2} \ldots
a_{\nu}^{\varepsilon_{\nu}},~~~\varepsilon_l = \pm 1,~~~a_l \in
\{\sigma_1, \sigma_2, \ldots, \sigma_{n-1}, \tau_1, \tau_2, \ldots, \tau_{n-1}
\},
$$
the word
$$
\tau(u) = S_{k_1,a_1}^{\varepsilon_1} \,  S_{k_2,a_2}^{\varepsilon_2}
\ldots S_{k_{\nu},a_{\nu}}^{\varepsilon_{\nu}}
$$
in the generators of $SP_n$, where $k_j$ is a representative of the ($j-1$)th
initial segment
of the word $u$ if $\varepsilon_j = 1$ and $k_j$ is a representative of the $j$th
initial segment of
the word $u$ if
$\varepsilon_j = -1$.

By \cite[Theorem 2.9]{KMS}, the group $SP_n$ is defined by relations
$$
r_{\mu,\lambda} = \tau (\lambda  \, r_{\mu} \,  \lambda^{-1}),~~~\lambda \in
\Lambda_n,
$$
where $r_{\mu}$ is the defining relation of $SG_n$.

\section{Generators and defining relations for $SP_2$ and $SP_3$}

In this section we will use the Reidemeister--Schreier method, which we described in the previous section.

\subsection{Case $n=2$.} In this case
$$
SG_2 = \langle \sigma_1, \tau_1~|~\sigma_1 \tau_1 = \tau_1 \sigma_1 \rangle \cong \mathbb{Z} \times \mathbb{Z}.
$$
The set of coset representatives:
$$
\Lambda_2 = \{ 1, \sigma_1 \}.
$$
The group $SP_2$ is generated by elements
$$
S_{\lambda,a} = \lambda a \cdot (\overline{\lambda a})^{-1},~~~\lambda \in \Lambda_2,~~a \in \{ \sigma_1, \tau_1 \}.
$$
Hence,
$$
S_{1,\sigma_1} = \sigma_1 \cdot (\overline{\sigma_1})^{-1} = \sigma_1 \cdot \sigma_1^{-1} = 1,
$$
$$
S_{1,\tau_1} = \tau_1 \cdot (\overline{\tau_1})^{-1} = \tau_1 \cdot \sigma_1^{-1},
$$
$$
S_{\sigma_1,\sigma_1} = \sigma_1^2 \cdot \overline{\sigma_1^2}^{-1} = \sigma_1^2 \cdot 1 = \sigma_1^2,
$$
$$
S_{\sigma_1,\tau_1} = \sigma_1 \tau_1 \cdot (\overline{\sigma_1 \tau_1})^{-1} = \sigma_1 \tau_1.
$$
We see that $SP_2$ is generated by three elements:
$$
S_{1,\tau_1} =  \tau_1  \sigma_1^{-1},~~~S_{\sigma_1,\sigma_1} =  \sigma_1^2,~~~S_{\sigma_1,\tau_1} = \sigma_1 \tau_1.
$$
The element $a_{12} = \sigma_1^2$ is a generator of the pure braid group $P_2$.

To find the set of defining relations for $SP_2$ take the relation $r = \sigma_1 \tau_1 \sigma_1^{-1} \tau_1^{-1}$ and using the rewritable process we get
$$
r = S_{1,\sigma_1} S_{\sigma_1,\tau_1} S_{\sigma_1,\sigma_1}^{-1} S_{1,\tau_1}^{-1} = S_{\sigma_1,\tau_1} S_{\sigma_1,\sigma_1}^{-1} S_{1,\tau_1}^{-1} = 1.
$$
Using this relation we can remove the generator
$$
S_{1,\tau_1} = S_{\sigma_1,\tau_1} S_{\sigma_1,\sigma_1}^{-1}.
$$

Relation
$$
\sigma_1 r \sigma_1^{-1} = S_{1,\sigma_1} S_{\sigma_1,\sigma_1} S_{1,\tau_1} S_{1,\sigma_1}^{-1} S_{\sigma_1,\tau_1}^{-1} S_{1,\sigma_1}^{-1} =
 S_{\sigma_1,\sigma_1} S_{1,\tau_1} S_{\sigma_1,\tau_1}^{-1} = 1.
$$
Applying the previous relation we get
$$
 S_{\sigma_1,\sigma_1} S_{\sigma_1,\tau_1} = S_{\sigma_1,\tau_1} S_{\sigma_1,\sigma_1}.
$$
Put $a_{12} =  S_{\sigma_1,\sigma_1}$, $b_{12} = S_{\sigma_1,\tau_1}$. Then, using the fact that $SG_2$ is abelian we get

\begin{lemma} \label{l3.1}
1) $SP_2 = \langle a_{12}, b_{12}~|~a_{12} \, b_{12} = b_{12} \, a_{12} \rangle \cong \mathbb{Z} \times \mathbb{Z};$

2) $SP_2$ is normal in $SG_2$ and the  action of $SG_2$ on $SP_2$ is defined by the formulas
$$
a_{12}^{\sigma_1} = a_{12},~~~b_{12}^{\sigma_1} = b_{12},
$$
$$
a_{12}^{\tau_1} = a_{12},~~~b_{12}^{\tau_1} = b_{12}.
$$
\end{lemma}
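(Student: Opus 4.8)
The plan is to finish off the Reidemeister–Schreier computation that the preceding discussion has essentially completed, and then to read off the conjugation action directly from the commutativity of $SG_2$.

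For part 1), I would first note that $SG_2$ has a single defining relator, namely $r = \sigma_1 \tau_1 \sigma_1^{-1} \tau_1^{-1}$, so the method produces exactly one rewritten relation $r_{r,\lambda} = \tau(\lambda r \lambda^{-1})$ for each of the two coset representatives $\lambda \in \Lambda_2 = \{1, \sigma_1\}$. Both of these have been computed above: the case $\lambda = 1$ gives $S_{\sigma_1,\tau_1} S_{\sigma_1,\sigma_1}^{-1} S_{1,\tau_1}^{-1} = 1$, which I use to eliminate the generator $S_{1,\tau_1} = S_{\sigma_1,\tau_1} S_{\sigma_1,\sigma_1}^{-1}$; the case $\lambda = \sigma_1$ gives $S_{\sigma_1,\sigma_1} S_{1,\tau_1} S_{\sigma_1,\tau_1}^{-1} = 1$. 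Substituting the expression for $S_{1,\tau_1}$ into the second relation and writing $a_{12} = S_{\sigma_1,\sigma_1}$, $b_{12} = S_{\sigma_1,\tau_1}$ collapses it to $a_{12} b_{12} = b_{12} a_{12}$. Hence $SP_2$ admits the presentation $\langle a_{12}, b_{12} \mid a_{12} b_{12} = b_{12} a_{12}\rangle$, which is exactly the standard presentation of $\mathbb{Z} \times \mathbb{Z}$.

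For part 2), normality of $SP_2$ in $SG_2$ is immediate, since $SP_2 = \ker \pi$. To obtain the action I would substitute the definitions $a_{12} = \sigma_1^2$ and $b_{12} = \sigma_1 \tau_1$ and conjugate, reading $x^g = g^{-1} x g$ as in the conventions fixed earlier. Because $SG_2 \cong \mathbb{Z} \times \mathbb{Z}$ is abelian, conjugation by any element of $SG_2$ is the identity automorphism, so every element of $SP_2$ is fixed; in particular all four displayed formulas hold. Explicitly, for instance $b_{12}^{\sigma_1} = \sigma_1^{-1}(\sigma_1 \tau_1)\sigma_1 = \tau_1 \sigma_1 = \sigma_1 \tau_1 = b_{12}$, and the remaining three equalities are verified the same way.

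As for the main obstacle, there is really none of substance here: commutativity of $SG_2$ trivializes part 2), and the presentation in part 1) is forced by the rewriting already carried out. The only point deserving care is completeness of the bookkeeping, namely checking that the two relators $r_{r,1}$ and $r_{r,\sigma_1}$ exhaust all relations the method produces and that no generator other than $a_{12}$ and $b_{12}$ survives after the elimination of $S_{1,\tau_1}$. Once this is confirmed, both claims follow.
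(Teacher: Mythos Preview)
Your proposal is correct and follows essentially the same approach as the paper: the Reidemeister--Schreier computation preceding the lemma already yields the two relations you describe, from which the presentation in part~1 is read off after eliminating $S_{1,\tau_1}$, and part~2 is handled by noting that $SG_2$ is abelian so conjugation acts trivially. Your treatment is slightly more explicit about the bookkeeping (confirming that only two relators arise and only two generators survive), but the argument is the same.
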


\subsection{Case $n=3$.} In this case $SG_3$ is generated by elements
$$
\sigma_1, \sigma_2, \tau_1, \tau_2,
$$
and is defined by relations
$$
\sigma_1 \tau_1 = \tau_1 \sigma_1,~~~\sigma_1 \sigma_2 \sigma_1 = \sigma_2 \sigma_1 \sigma_2,~~~\sigma_2 \tau_2 = \tau_2 \sigma_2,~~~
\sigma_1 \sigma_2 \tau_1 = \tau_2 \sigma_1 \sigma_2,~~~\sigma_2 \sigma_1 \tau_2 = \tau_1 \sigma_2 \sigma_1.
$$
The set of coset representatives:
$$
\Lambda_3 = \{ 1, \sigma_1, \sigma_2, \sigma_1 \sigma_2, \sigma_2 \sigma_1, \sigma_1 \sigma_2 \sigma_1  \}.
$$
The group $SP_3$ is generated by elements
$$
S_{\lambda,a} = \lambda a \cdot (\overline{\lambda a})^{-1},~~~\lambda \in \Lambda_3,~~a \in \{ \sigma_1, \sigma_2, \tau_1, \tau_2 \}.
$$
We find these elements
\begin{align*}
& S_{1,\sigma_1} = \sigma_1 \cdot (\overline{\sigma_1})^{-1} = \sigma_1 \cdot \sigma_1^{-1} = 1,\\
& S_{1,\sigma_2} = \sigma_2 \cdot (\overline{\sigma_2})^{-1} = \sigma_2 \cdot \sigma_2^{-1} = 1,\\
& S_{1,\tau_1} = \tau_1 \cdot (\overline{\tau_1})^{-1} = \tau_1 \cdot \sigma_1^{-1},\\
& S_{1,\tau_2} = \tau_2 \cdot (\overline{\tau_2})^{-1} = \tau_2 \cdot \sigma_2^{-1},\\
\end{align*}
\begin{align*}
& S_{\sigma_1,\sigma_1} = \sigma_1^2 \cdot \overline{\sigma_1^2}^{-1} = \sigma_1^2 \cdot 1 = \sigma_1^2,\\
& S_{\sigma_1,\sigma_2} = \sigma_1 \sigma_2 \cdot (\overline{\sigma_1 \sigma_2})^{-1} = 1,\\
& S_{\sigma_1,\tau_1} = \sigma_1 \tau_1 \cdot (\overline{\sigma_1 \tau_1})^{-1} = \sigma_1 \tau_1,\\
& S_{\sigma_1,\tau_2} = \sigma_1 \tau_2 \cdot (\overline{\sigma_1 \tau_2})^{-1} = \sigma_1 \tau_2 \sigma_2^{-1} \sigma_1^{-1},\\
\end{align*}
\begin{align*}
& S_{\sigma_2,\sigma_1} = \sigma_2 \sigma_1 \cdot (\overline{\sigma_2 \sigma_1})^{-1} = 1,\\
& S_{\sigma_2,\sigma_2} = \sigma_2^2 \cdot \overline{\sigma_2^2}^{-1} = \sigma_2^2 \cdot 1 = \sigma_2^2,\\
& S_{\sigma_2,\tau_1} = \sigma_2 \tau_1 \cdot (\overline{\sigma_2 \sigma_1})^{-1} = \sigma_2 \tau_1 \sigma_1^{-1} \sigma_2^{-1},\\
& S_{\sigma_2,\tau_2} = \sigma_2 \tau_2,\\
\end{align*}
\begin{align*}
& S_{\sigma_1 \sigma_2,\sigma_1} = \sigma_1 \sigma_2 \sigma_1 \cdot (\sigma_1 \sigma_2 \sigma_1)^{-1} = 1,\\
& S_{\sigma_1 \sigma_2,\sigma_2} = \sigma_1 \sigma_2^2 \sigma_1^{-1},\\
& S_{\sigma_1 \sigma_2,\tau_1} = \sigma_1 \sigma_2 \tau_1 \sigma_1^{-1} \sigma_2^{-1} \sigma_1^{-1},\\
&S_{\sigma_1 \sigma_2,\tau_2} = \sigma_1 \sigma_2 \tau_2 \sigma_1^{-1},\\
\end{align*}
\begin{align*}
& S_{\sigma_2 \sigma_1,\sigma_1} = \sigma_2 \sigma_1^2 \sigma_2^{-1},\\
& S_{\sigma_2 \sigma_1,\sigma_2} = \sigma_2 \sigma_1 \sigma_2 \sigma_1^{-1} \sigma_2^{-1} \sigma_1^{-1},\\
& S_{\sigma_2 \sigma_1,\tau_1} = \sigma_2 \sigma_1 \tau_1 \sigma_2^{-1},\\
& S_{\sigma_2 \sigma_1,\tau_2} = \sigma_2 \sigma_1 \tau_2 \sigma_1^{-1} \sigma_2^{-1} \sigma_1^{-1},\\
\end{align*}
\begin{align*}
& S_{\sigma_1 \sigma_2 \sigma_1, \sigma_1} = \sigma_1 \sigma_2 \sigma_1^2 \sigma_2^{-1} \sigma_1^{-1},\\
& S_{\sigma_1 \sigma_2 \sigma_1, \sigma_2} = \sigma_1 \sigma_2 \sigma_1 \sigma_2 \sigma_1^{-1} \sigma_2^{-1},\\
& S_{\sigma_1 \sigma_2 \sigma_1, \tau_1} = \sigma_1 \sigma_2 \sigma_1 \tau_1 \sigma_2^{-1} \sigma_1^{-1},\\
& S_{\sigma_1 \sigma_2 \sigma_1, \tau_2} = \sigma_1 \sigma_2 \sigma_1 \tau_2 \sigma_1^{-1} \sigma_2^{-1}.\\
\end{align*}
Hence, $SP_3$ is generated by elements
$$
S_{1,\tau_1},~~S_{1,\tau_2},~~~S_{\sigma_1,\sigma_1},~~~S_{\sigma_1,\tau_1},~~~S_{\sigma_1,\tau_2},~~~S_{\sigma_2,\sigma_2},~~~S_{\sigma_2,\tau_1},~~~
S_{\sigma_2,\tau_2},~~~S_{\sigma_1 \sigma_2,\sigma_2},~~~S_{\sigma_1 \sigma_2,\tau_1},~~~S_{\sigma_1 \sigma_2,\tau_2},
$$
$$
S_{\sigma_2 \sigma_1,\sigma_1},~~~S_{\sigma_2 \sigma_1,\sigma_2},~~~S_{\sigma_2 \sigma_1,\tau_1},~~~S_{\sigma_2 \sigma_1,\tau_2},~~~
S_{\sigma_1 \sigma_2 \sigma_1, \sigma_1},~~~S_{\sigma_1 \sigma_2 \sigma_1, \sigma_2},~~~S_{\sigma_1 \sigma_2 \sigma_1, \tau_1},~~~S_{\sigma_1 \sigma_2 \sigma_1, \tau_2}.
$$
We see that elements
$$
S_{\sigma_1,\sigma_1},~~~S_{\sigma_2,\sigma_2},~~~S_{\sigma_1 \sigma_2,\sigma_2},~~~S_{\sigma_2 \sigma_1,\sigma_1},~~~S_{\sigma_2 \sigma_1,\sigma_2},~~~
S_{\sigma_1 \sigma_2 \sigma_1, \sigma_1},~~~S_{\sigma_1 \sigma_2 \sigma_1, \sigma_2},
$$
contain only generators $\sigma_1$ and $\sigma_2$. We will show that these elements generate the pure braid group $P_3$.

Next find the set of defining relations.

\textbf{1)}Take the relation $r_1 = \sigma_1 \tau_1 \sigma_1^{-1} \tau_1^{-1}$. We  considered relation $r_1 = 1$ and $\sigma_1 r_1 \sigma_1^{-1} = 1$ in the previous case. Conjugating $r_1$ by other coset representatives, we get
$$
r_{1,\sigma_2} = \sigma_2 r_1 \sigma_2^{-1} = S_{1,\sigma_2} S_{\sigma_2,\sigma_1} S_{\sigma_2 \sigma_1,\tau_1} S_{\sigma_2 \sigma_1,\sigma_1}^{-1} S_{\sigma_2, \tau_1}^{-1} S_{1,\sigma_2}^{-1} =  S_{\sigma_2 \sigma_1,\tau_1} S_{\sigma_2 \sigma_1,\sigma_1}^{-1} S_{\sigma_2, \tau_1}^{-1} = 1,
$$
From this relation
$$
S_{\sigma_2, \tau_1} = S_{\sigma_2 \sigma_1,\tau_1} S_{\sigma_2 \sigma_1, \sigma_1}^{-1}.
$$
Hence, we can remove this relation and the generator $S_{\sigma_2, \tau_1}$.

Relation
$$
r_{1,\sigma_1\sigma_2} =  \sigma_1 \sigma_2 r_1 \sigma_2^{-1} \sigma_1^{-1} = S_{1,\sigma_1} S_{\sigma_1,\sigma_2} S_{\sigma_1 \sigma_2, \sigma_1} S_{\sigma_1 \sigma_2 \sigma_1,\tau_1} S_{\sigma_1 \sigma_2 \sigma_1, \sigma_1}^{-1} S_{\sigma_1 \sigma_2, \tau_1}^{-1} S_{\sigma_1, \sigma_2}^{-1} S_{1,\sigma_1}^{-1} =
$$
$$
= S_{\sigma_1 \sigma_2 \sigma_1,\tau_1} S_{\sigma_1 \sigma_2 \sigma_1, \sigma_1}^{-1} S_{\sigma_1 \sigma_2, \tau_1}^{-1}  = 1.
$$
From this relation
$$
S_{\sigma_1 \sigma_2, \tau_1} = S_{\sigma_1 \sigma_2 \sigma_1,\tau_1} S_{\sigma_1 \sigma_2 \sigma_1, \sigma_1}^{-1}.
$$
Hence, we can remove this relation and the generator $S_{\sigma_1 \sigma_2, \tau_1}$.

From the relation

\[{r_{1,{\sigma _2}{\sigma _1}}} = {S_{{\sigma _2}{\sigma _1},{\sigma _1}}}{S_{{\sigma _2},{\tau _1}}}S_{{\sigma _2}{\sigma _1},{\tau _1}}^{ - 1}=1\]
follows that

\[{S_{{\sigma _2}{\sigma _1},{\tau _1}}} = {S_{{\sigma _2}{\sigma _1},{\sigma _1}}}{S_{{\sigma _2},{\tau _1}}}\]
Using the formula for ${S_{{\sigma _2},{\tau _1}}}$, we get relation

\[{S_{{\sigma _2}{\sigma _1},{\tau _1}}}{S_{{\sigma _2}{\sigma _1},{\sigma _1}}} = {S_{{\sigma _2}{\sigma _1},{\sigma _1}}}{S_{{\sigma _2}{\sigma _1},{\tau _1}}}\]

Relation
$$
r_{1,\sigma_1\sigma_2\sigma_1} = \sigma_1 \sigma_2 \sigma_1 r_1 \sigma_1^{-1} \sigma_2^{-1} \sigma_1^{-1} =
S_{1,\sigma_1} S_{\sigma_1,\sigma_2} S_{\sigma_1 \sigma_2, \sigma_1} S_{\sigma_1 \sigma_2 \sigma_1, \sigma_1}
  S_{\sigma_1 \sigma_2,\tau_1} \cdot
  $$
$$
\cdot  S_{\sigma_1 \sigma_2, \sigma_1}^{-1} S_{\sigma_1 \sigma_2 \sigma_1, \tau_1}^{-1} S_{\sigma_1 \sigma_2, \sigma_1}^{-1} S_{\sigma_1, \sigma_2}^{-1}  S_{1,\sigma_1}^{-1} =
$$
$$
=  S_{\sigma_1 \sigma_2 \sigma_1, \sigma_1}
  S_{\sigma_1 \sigma_2,\tau_1} S_{\sigma_1 \sigma_2 \sigma_1, \tau_1}^{-1} = 1.
$$
Since
$$
S_{\sigma_1 \sigma_2 \sigma_1, \sigma_1} = S_{\sigma_2, \sigma_2},~~~ S_{\sigma_1 \sigma_2,\tau_1} = S_{\sigma_1 \sigma_2 \sigma_1, \tau_1}
S_{\sigma_2, \sigma_2}^{-1},
$$
we get the relation
$$
S_{\sigma_2, \sigma_2} S_{\sigma_1 \sigma_2 \sigma_1, \tau_1}   S_{\sigma_2, \sigma_2}^{-1} = S_{\sigma_1 \sigma_2 \sigma_1, \tau_1}.
$$

\medskip

\begin{lemma} \label{l1}
From relation $r_1 = \sigma_1 \tau_1 \sigma_1^{-1} \tau_1^{-1}$ follows 6 relations, applying which we can remove generators:
$$
S_{1,\tau_1} = S_{\sigma_1,\tau_1} S_{\sigma_1,\sigma_1}^{-1},~~~S_{\sigma_2, \tau_1} = S_{\sigma_2 \sigma_1,\tau_1} S_{\sigma_2 \sigma_1, \sigma_1}^{-1},~~~S_{\sigma_1 \sigma_2, \tau_1} = S_{\sigma_1 \sigma_2 \sigma_1,\tau_1} S_{\sigma_1 \sigma_2 \sigma_1, \sigma_1}^{-1},
$$
and we get 3 relations:
$$
 S_{\sigma_1,\sigma_1} S_{\sigma_1,\tau_1} = S_{\sigma_1,\tau_1} S_{\sigma_1,\sigma_1},
$$
$$
{S_{{\sigma _2}{\sigma _1},{\tau _1}}}{S_{{\sigma _2}{\sigma _1},{\sigma _1}}} = {S_{{\sigma _2}{\sigma _1},{\sigma _1}}}{S_{{\sigma _2}{\sigma _1},{\tau _1}}},
$$
$$
S_{\sigma_2, \sigma_2} S_{\sigma_1 \sigma_2 \sigma_1, \tau_1}   S_{\sigma_2, \sigma_2}^{-1} = S_{\sigma_1 \sigma_2 \sigma_1, \tau_1}.
$$
\end{lemma}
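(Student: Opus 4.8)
The plan is to apply the Reidemeister--Schreier rewriting process $\tau$ to each of the six conjugates $\lambda r_1 \lambda^{-1}$ with $\lambda$ ranging over $\Lambda_3 = \{1, \sigma_1, \sigma_2, \sigma_1\sigma_2, \sigma_2\sigma_1, \sigma_1\sigma_2\sigma_1\}$. By \cite[Theorem 2.9]{KMS}, recalled in Subsection~\ref{method}, these six rewritten words form exactly the set of defining relations of $SP_3$ contributed by the single relator $r_1$, so it suffices to compute and simplify them. The representatives $\lambda = 1$ and $\lambda = \sigma_1$ were already handled in the case $n=2$: they give the elimination relation $S_{1,\tau_1} = S_{\sigma_1,\tau_1} S_{\sigma_1,\sigma_1}^{-1}$ and the commutation relation $S_{\sigma_1,\sigma_1} S_{\sigma_1,\tau_1} = S_{\sigma_1,\tau_1} S_{\sigma_1,\sigma_1}$. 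It remains to treat $\sigma_2$, $\sigma_1\sigma_2$, $\sigma_2\sigma_1$ and $\sigma_1\sigma_2\sigma_1$.

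For each such $\lambda$ I would write $\lambda r_1 \lambda^{-1}$ as a reduced word in $\sigma_1^{\pm1}, \tau_1^{\pm1}$ and the letters of $\lambda$, and then apply $\tau$ letter by letter. The only real work is to attach to each letter the correct coset representative $k_j$: for a positive letter it is the representative of the preceding initial segment and for a negative letter that of the current initial segment, and in both cases this representative is found by projecting the initial segment into $S_3$ via $\pi$ and matching the resulting permutation against the six permutations realized by the elements of $\Lambda_3$. Because $\pi(\sigma_i) = \pi(\tau_i)$, these permutations are easy to follow. Once the $S$-symbols are read off, most of them collapse: every factor $S_{\mu,\sigma_i}$ with $\overline{\mu\sigma_i} = \mu\sigma_i$ equals $1$, and this is precisely what eliminates the outer letters coming from $\lambda$ and $\lambda^{-1}$.

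Carrying this out, $\lambda = \sigma_2$ and $\lambda = \sigma_1\sigma_2$ yield the two elimination relations $S_{\sigma_2,\tau_1} = S_{\sigma_2\sigma_1,\tau_1} S_{\sigma_2\sigma_1,\sigma_1}^{-1}$ and $S_{\sigma_1\sigma_2,\tau_1} = S_{\sigma_1\sigma_2\sigma_1,\tau_1} S_{\sigma_1\sigma_2\sigma_1,\sigma_1}^{-1}$, which let us delete $S_{\sigma_2,\tau_1}$ and $S_{\sigma_1\sigma_2,\tau_1}$. For $\lambda = \sigma_2\sigma_1$ the rewriting gives $S_{\sigma_2\sigma_1,\sigma_1} S_{\sigma_2,\tau_1} S_{\sigma_2\sigma_1,\tau_1}^{-1} = 1$, and substituting the expression just found for $S_{\sigma_2,\tau_1}$ turns it into the commutation relation $S_{\sigma_2\sigma_1,\tau_1} S_{\sigma_2\sigma_1,\sigma_1} = S_{\sigma_2\sigma_1,\sigma_1} S_{\sigma_2\sigma_1,\tau_1}$. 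Finally, for $\lambda = \sigma_1\sigma_2\sigma_1$, using the equality $S_{\sigma_1\sigma_2\sigma_1,\sigma_1} = S_{\sigma_2,\sigma_2}$ (an identity of words in $SG_3$ via the braid relation $\sigma_1\sigma_2\sigma_1 = \sigma_2\sigma_1\sigma_2$) together with the elimination relation for $S_{\sigma_1\sigma_2,\tau_1}$, the rewritten word simplifies to the conjugation relation $S_{\sigma_2,\sigma_2} S_{\sigma_1\sigma_2\sigma_1,\tau_1} S_{\sigma_2,\sigma_2}^{-1} = S_{\sigma_1\sigma_2\sigma_1,\tau_1}$. These four computations together with the two from the case $n=2$ give all six relations and establish the lemma.

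I expect the main difficulty to be organizational rather than conceptual: keeping the representative indices $k_j$ correct across all six rewritings, since one misidentified initial segment produces a wrong $S$-symbol and hence a spurious relation. To control this I would tabulate once the permutations attached to the elements of $\Lambda_3$, reuse that table throughout, and as a consistency check verify that each fully rewritten conjugate lies in $SP_3$ (that is, has trivial image under $\pi$) before performing any cancellation.
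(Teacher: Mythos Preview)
Your proposal is correct and follows essentially the same approach as the paper: you apply the Reidemeister--Schreier rewriting to each conjugate $\lambda r_1 \lambda^{-1}$ for $\lambda \in \Lambda_3$, obtain the same six raw relations, and perform the same substitutions (including the use of $S_{\sigma_1\sigma_2\sigma_1,\sigma_1} = S_{\sigma_2,\sigma_2}$ and the elimination formula for $S_{\sigma_1\sigma_2,\tau_1}$) to reach the stated final form. The only nuance worth noting is that the identification $S_{\sigma_1\sigma_2\sigma_1,\sigma_1} = S_{\sigma_2,\sigma_2}$, which you justify via the braid relation in $SG_3$, is in the paper formally a consequence of the relations coming from $r_2$ (Lemma~\ref{l2}); the paper also invokes it at this point without derivation, so your treatment matches.
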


\medskip

\textbf{2)} Take the relation $r_2 = \sigma_1 \sigma_2 \sigma_1 \sigma_2^{-1} \sigma_1^{-1} \sigma_2^{-1}$. Then
$$
r_2 = r_{2,1} = S_{1,\sigma_1} S_{\sigma_1,\sigma_2} S_{\sigma_1 \sigma_2, \sigma_1} S_{\sigma_2 \sigma_1, \sigma_2}^{-1}
S_{\sigma_2, \sigma_1}^{-1} S_{1, \sigma_2}^{-1} =  S_{\sigma_2 \sigma_1, \sigma_2}^{-1} = 1,
$$
i.e. $S_{\sigma_2 \sigma_1, \sigma_2} = 1$ and we can remove this generator.

Conjugating this relation by $\sigma_1^{-1}$, we get
$$
r_{2,\sigma_1} =  S_{1,\sigma_1} S_{\sigma_1,\sigma_1} S_{1,\sigma_2} S_{\sigma_2, \sigma_1} S_{\sigma_1 \sigma_2 \sigma_1, \sigma_2}^{-1}
S_{\sigma_1 \sigma_2, \sigma_1}^{-1} S_{\sigma_1, \sigma_2}^{-1} S_{1, \sigma_1}^{-1} =  S_{\sigma_1, \sigma_1} S_{\sigma_1 \sigma_2 \sigma_1, \sigma_2}^{-1} = 1,
$$
i.e. $S_{\sigma_1 \sigma_2 \sigma_1, \sigma_2} = S_{\sigma_1, \sigma_1}$ and we can remove $S_{\sigma_1 \sigma_2 \sigma_1, \sigma_2}$.

Conjugating $r_2$ by $\sigma_2^{-1}$, we get
$$
r_{2,\sigma_2} =  S_{1,\sigma_2} S_{\sigma_2,\sigma_1}  S_{\sigma_2 \sigma_1, \sigma_2} S_{\sigma_1 \sigma_2 \sigma_1, \sigma_1}
S_{\sigma_1, \sigma_2}^{-1} S_{1,\sigma_1}^{-1} S_{\sigma_2, \sigma_2}^{-1}  S_{1,\sigma_2}^{-1} =  S_{\sigma_2 \sigma_1, \sigma_2} S_{\sigma_1 \sigma_2 \sigma_1, \sigma_1}   S_{ \sigma_2, \sigma_2}^{-1} = 1.
$$
Since $S_{\sigma_2 \sigma_1, \sigma_2} = 1$, from this relation follows that
$S_{\sigma_1 \sigma_2 \sigma_1, \sigma_1} = S_{\sigma_2, \sigma_2}$ and we can remove $S_{\sigma_1 \sigma_2 \sigma_1, \sigma_1}$.

Conjugating $r_2$ by $(\sigma_1 \sigma_2)^{-1}$, we get

Relation
\[{r_{2,{\sigma _1}{\sigma _2}}} = {S_{{\sigma _1}{\sigma _2}{\sigma _1},{\sigma _2}}}{S_{{\sigma _2}{\sigma _1},{\sigma _1}}}S_{{\sigma _1},{\sigma _1}}^{ - 1}S_{{\sigma _1}{\sigma _2},{\sigma _2}}^{ - 1} = 1\]
gives relation
\[{S_{{\sigma _1}{\sigma _2}{\sigma _1},{\sigma _2}}}{S_{{\sigma _2}{\sigma _1},{\sigma _1}}} = {S_{{\sigma _1}{\sigma _2},{\sigma _2}}}{S_{{\sigma _1},{\sigma _1}}}\]
which is equivalent to
$$
{S_{{\sigma _1},{\sigma _1}}}{S_{{\sigma _2}{\sigma _1},{\sigma _1}}} = {S_{{\sigma _1}{\sigma _2},{\sigma _2}}}{S_{{\sigma _1},{\sigma _1}}}.
$$

From it we can remove the generator
$$
S_{{\sigma _1}{\sigma _2},{\sigma _2}} = S_{{\sigma _1},{\sigma _1}} S_{{\sigma _2}{\sigma _1},{\sigma _1}} S_{{\sigma _1},{\sigma _1}}^{-1}.
$$

Conjugating by $(\sigma _2 \sigma _1)^{-1}$ we get

\[{r_{2,{\sigma _2}{\sigma _1}}} = {S_{{\sigma _2}{\sigma _1},{\sigma _1}}}{S_{{\sigma _2},{\sigma _2}}}S_{{\sigma _1}{\sigma _2},{\sigma _2}}^{ - 1}S_{{\sigma _1}{\sigma _2}{\sigma _1},{\sigma _1}}^{ - 1} = 1\]
or
\[{S_{{\sigma _2}{\sigma _1},{\sigma _1}}}{S_{{\sigma _2},{\sigma _2}}} = {S_{{\sigma _1}{\sigma _2}{\sigma _1},{\sigma _1}}}{S_{{\sigma _1}{\sigma _2},{\sigma _2}}}.\]

From the previous relation

\[ S_{{\sigma _1},{\sigma _1}} {S_{{\sigma _2}{\sigma _1},{\sigma _1}}}{S_{{\sigma _1},{\sigma _1}}^{-1}} = {S_{{\sigma _2},{\sigma _2}}^{-1}} {S_{{\sigma _2}{\sigma _1}, {\sigma _1}}}{S_{{\sigma _2},{\sigma _2}}}.\]

We see that it is relation in $P_3$:
$$
a_{12} a_{13} a_{12}^{-1} = a_{23}^{-1} a_{13} a_{23}.
$$

Take the relation:

\[{r_{2,{\sigma _1}{\sigma _2}{\sigma _1}}} = {S_{{\sigma _1}{\sigma _2}{\sigma _1},{\sigma _1}}}{S_{{\sigma _1}{\sigma _2},{\sigma _2}}}{S_{{\sigma _1},{\sigma _1}}}S_{{\sigma _2},{\sigma _2}}^{ - 1}S_{{\sigma _2}{\sigma _1},{\sigma _1}}^{ - 1}S_{{\sigma _1}{\sigma _2}{\sigma _1},{\sigma _2}}^{ - 1} = 1,\]
that is equivalent to

\[{S_{{\sigma _1}{\sigma _2}{\sigma _1},{\sigma _1}}}{S_{{\sigma _1}{\sigma _2},{\sigma _2}}}{S_{{\sigma _1},{\sigma _1}}} = {S_{{\sigma _1}{\sigma _2}{\sigma _1},{\sigma _2}}}{S_{{\sigma _2}{\sigma _1},{\sigma _1}}}{S_{{\sigma _2},{\sigma _2}}}\]
or

\[{S_{{\sigma _2},{\sigma _2}}}{S_{{\sigma _1}{\sigma _2},{\sigma _2}}}{S_{{\sigma _1},{\sigma _1}}} = {S_{{\sigma _1},{\sigma _1}}}{S_{{\sigma _2}{\sigma _1},{\sigma _1}}}{S_{{\sigma _2},{\sigma _2}}}.\]
Hence we get
\[{S_{{\sigma _2},{\sigma _2}}}{S_{{\sigma _1},{\sigma _1}}}{S_{{\sigma _2}{\sigma _1},{\sigma _1}}} = {S_{{\sigma _1},{\sigma _1}}}{S_{{\sigma _2}{\sigma _1},{\sigma _1}}}{S_{{\sigma _2},{\sigma _2}}}.\]
This is relation in $P_3$:
$$
a_{23} a_{12} a_{13} = a_{12} a_{13} a_{23}.
$$
Taking into account the previous relation we get the following conjugation rule
$$
a_{12} a_{23} a_{12}^{-1} = a_{23}^{-1} a_{13}^{-1} a_{23} a_{13} a_{23}.
$$

\medskip

\begin{lemma} \label{l2}
From relation $r_2 = \sigma_1 \sigma_2 \sigma_1 \sigma_2^{-1} \sigma_1^{-1} \sigma_2^{-1}$ follows 6 relations, applying which we can remove 4 generators:
$$
S_{\sigma_2 \sigma_1, \sigma_2} = 1,~~~S_{\sigma_1 \sigma_2 \sigma_1, \sigma_2} = S_{\sigma_1, \sigma_1},~~~S_{\sigma_1 \sigma_2 \sigma_1, \sigma_1} = S_{\sigma_2, \sigma_2},~~~S_{{\sigma _1}{\sigma _2},{\sigma _2}} = S_{{\sigma _1},{\sigma _1}} S_{{\sigma _2}{\sigma _1},{\sigma _1}} S_{{\sigma _1},{\sigma _1}}^{-1},
$$
and we get 2 relations:
$$
S_{{\sigma _1},{\sigma _1}} {S_{{\sigma _2}{\sigma _1},{\sigma _1}}}{S_{{\sigma _1},{\sigma _1}}^{-1}} = {S_{{\sigma _2},{\sigma _2}}^{-1}} {S_{{\sigma _2}{\sigma _1}, {\sigma _1}}}{S_{{\sigma _2},{\sigma _2}}}
$$
$$
{S_{{\sigma _2},{\sigma _2}}}{S_{{\sigma _1},{\sigma _1}}}{S_{{\sigma _2}{\sigma _1},{\sigma _1}}} = {S_{{\sigma _1},{\sigma _1}}}{S_{{\sigma _2}{\sigma _1},{\sigma _1}}}{S_{{\sigma _2},{\sigma _2}}}.
$$
\end{lemma}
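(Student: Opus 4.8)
The plan is to apply the Reidemeister--Schreier presentation (\cite[Theorem~2.9]{KMS}) to the single relator $r_2 = \sigma_1\sigma_2\sigma_1\sigma_2^{-1}\sigma_1^{-1}\sigma_2^{-1}$ of $SG_3$. Because $\Lambda_3$ has six elements, this relator contributes exactly the six relations $r_{2,\lambda} = \tau(\lambda\, r_2\, \lambda^{-1})$, one for each $\lambda \in \Lambda_3 = \{1, \sigma_1, \sigma_2, \sigma_1\sigma_2, \sigma_2\sigma_1, \sigma_1\sigma_2\sigma_1\}$. First I would produce these six words one at a time: for a fixed $\lambda$ I read the word $\lambda\, r_2\, \lambda^{-1}$ from left to right and replace each letter $a^{\varepsilon}$ by $S_{k,a}^{\varepsilon}$, where $k$ is the representative of the $(j-1)$-th initial segment if $\varepsilon = +1$ and of the $j$-th initial segment if $\varepsilon = -1$, as prescribed by the rewriting process $\tau$.

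The determination of the representatives $k$ is the routine but delicate core of the argument, and I would organise it through the projection $\pi\colon SG_3 \to S_3$. Every initial segment has an image in $S_3$ determined by $\pi(\sigma_1) = (1,2)$ and $\pi(\sigma_2) = (2,3)$, and its representative $k \in \Lambda_3$ is the unique transversal element with the same image; so at each step I only multiply one transposition into the running permutation and read off the matching word of $\Lambda_3$. Having assembled $\tau(\lambda r_2 \lambda^{-1})$, I freely reduce it using the five identities $S_{1,\sigma_1} = S_{1,\sigma_2} = S_{\sigma_1,\sigma_2} = S_{\sigma_2,\sigma_1} = S_{\sigma_1\sigma_2,\sigma_1} = 1$, each of which holds because $\lambda\sigma_i$ is again a representative, together with the explicit expressions for the surviving Schreier generators computed above.

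Carrying this out, the four relations $r_{2,\lambda}$ with $\lambda \in \{1, \sigma_1, \sigma_2, \sigma_1\sigma_2\}$ each collapse, after cancellation, to an equation isolating a single generator, giving the four elimination relations $S_{\sigma_2\sigma_1,\sigma_2} = 1$, $S_{\sigma_1\sigma_2\sigma_1,\sigma_2} = S_{\sigma_1,\sigma_1}$, $S_{\sigma_1\sigma_2\sigma_1,\sigma_1} = S_{\sigma_2,\sigma_2}$, and $S_{\sigma_1\sigma_2,\sigma_2} = S_{\sigma_1,\sigma_1}S_{\sigma_2\sigma_1,\sigma_1}S_{\sigma_1,\sigma_1}^{-1}$ (in this order the right-hand sides involve only generators that will be kept). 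Substituting these four equalities into the two remaining relations $r_{2,\sigma_2\sigma_1}$ and $r_{2,\sigma_1\sigma_2\sigma_1}$ then yields the two displayed identities. The main obstacle is purely the accuracy of the bookkeeping: one must apply the initial-segment-versus-sign convention of $\tau$ exactly, and substitute the four eliminated generators consistently, since a single misidentified representative corrupts an entire relation. A convenient check is that, after setting $a_{12} = S_{\sigma_1,\sigma_1}$, $a_{23} = S_{\sigma_2,\sigma_2}$ and $a_{13} = S_{\sigma_2\sigma_1,\sigma_1}$, the two surviving relations must coincide with the defining relations $a_{12}a_{13}a_{12}^{-1} = a_{23}^{-1}a_{13}a_{23}$ and $a_{23}a_{12}a_{13} = a_{12}a_{13}a_{23}$ of the pure braid group $P_3$, which confirms both the count and the form of the output.
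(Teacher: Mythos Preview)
Your proposal is correct and follows essentially the same route as the paper: compute the six rewritten relators $r_{2,\lambda}=\tau(\lambda r_2\lambda^{-1})$ for $\lambda\in\Lambda_3$, cancel the trivial Schreier generators, obtain the four elimination relations from $\lambda\in\{1,\sigma_1,\sigma_2,\sigma_1\sigma_2\}$, and substitute them into the remaining two conjugates $r_{2,\sigma_2\sigma_1}$ and $r_{2,\sigma_1\sigma_2\sigma_1}$ to get the surviving relations. Your organisational device of tracking representatives via $\pi\colon SG_3\to S_3$ and your sanity check against the $P_3$ relations are helpful additions, but the underlying argument is identical to the paper's.
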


From the analysis of relations $r_{2,\lambda}$ follows

\begin{corollary}
The generators
$$
S_{{\sigma _1},{\sigma _1}} = a_{12},~~~S_{{\sigma _2}{\sigma _1}, {\sigma _1}} = a_{13},~~~S_{{\sigma _2},{\sigma _2}} = a_{23},
$$
satisfy relations
$$
a_{12} a_{13} a_{12}^{-1} = a_{23}^{-1} a_{13} a_{23},~~~a_{12} a_{23} a_{12}^{-1} = a_{23}^{-1} a_{13}^{-1} a_{23} a_{13} a_{23}.
$$
\end{corollary}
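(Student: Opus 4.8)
The plan is to obtain both identities by substituting $a_{12} = S_{\sigma_1,\sigma_1}$, $a_{13} = S_{\sigma_2\sigma_1,\sigma_1}$ and $a_{23} = S_{\sigma_2,\sigma_2}$ into the two relations that survive in Lemma \ref{l2}, followed by a single short rearrangement. The first surviving relation of Lemma \ref{l2}, namely $S_{\sigma_1,\sigma_1} S_{\sigma_2\sigma_1,\sigma_1} S_{\sigma_1,\sigma_1}^{-1} = S_{\sigma_2,\sigma_2}^{-1} S_{\sigma_2\sigma_1,\sigma_1} S_{\sigma_2,\sigma_2}$, becomes verbatim the first asserted identity $a_{12} a_{13} a_{12}^{-1} = a_{23}^{-1} a_{13} a_{23}$, so no work is needed there. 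It then remains to derive the second identity from this one together with the second surviving relation of Lemma \ref{l2}, which under the same substitution reads $a_{23} a_{12} a_{13} = a_{12} a_{13} a_{23}$.

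The essential step is to combine the two. First I would rewrite the first identity as $a_{23} a_{12} a_{13} = a_{13} a_{23} a_{12}$, by multiplying it on the left by $a_{23}$ and on the right by $a_{12}$. Comparing this with $a_{23} a_{12} a_{13} = a_{12} a_{13} a_{23}$ yields $a_{13} a_{23} a_{12} = a_{12} a_{13} a_{23}$, that is, $a_{12}$ commutes with the product $a_{13} a_{23}$. This commutation $[a_{12}, a_{13} a_{23}] = 1$ is really the whole content of the computation; everything else is formal bookkeeping.

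Finally I would expand the commutation relation $a_{12} (a_{13} a_{23}) a_{12}^{-1} = a_{13} a_{23}$ as $(a_{12} a_{13} a_{12}^{-1})(a_{12} a_{23} a_{12}^{-1}) = a_{13} a_{23}$, solve for the unknown conjugate, and substitute the first identity to replace $a_{12} a_{13} a_{12}^{-1}$ by $a_{23}^{-1} a_{13} a_{23}$. This gives
\[
a_{12} a_{23} a_{12}^{-1} = (a_{12} a_{13} a_{12}^{-1})^{-1} a_{13} a_{23} = a_{23}^{-1} a_{13}^{-1} a_{23} a_{13} a_{23},
\]
which is exactly the second asserted relation. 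Since every step is an elementary rearrangement of the two relations already produced in Lemma \ref{l2}, I do not expect any genuine obstacle; the only point requiring care is keeping track of the commutation $[a_{12}, a_{13} a_{23}] = 1$, which is the link between the surviving relations of Lemma \ref{l2} and the conjugation rule stated in the corollary.
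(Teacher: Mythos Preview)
Your proposal is correct and follows essentially the same approach as the paper. The paper also derives the first identity directly from the relation $r_{2,\sigma_2\sigma_1}$ (the first surviving relation of Lemma~\ref{l2}) and then obtains the second conjugation rule by combining it with the relation $a_{23} a_{12} a_{13} = a_{12} a_{13} a_{23}$ coming from $r_{2,\sigma_1\sigma_2\sigma_1}$; the only difference is that you make the intermediate commutation $[a_{12}, a_{13}a_{23}] = 1$ explicit, whereas the paper simply says ``Taking into account the previous relation we get the following conjugation rule.''
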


\medskip

\textbf{3)} Consider the relation $r_3 = \sigma_2 \tau_2 \sigma_2^{-1} \tau_2^{-1}$. From it

\[{r_{3,1}} = {S_{{\sigma _2},{\tau _2}}}S_{{\sigma _2},{\sigma _2}}^{ - 1}S_{1,{\tau _2}}^{ - 1}\]
and we can remove the generator

\[{S_{1,{\tau _2}}} = {S_{{\sigma _2},{\tau _2}}}S_{{\sigma _2},{\sigma _2}}^{ - 1}.\]

Conjugating by $(\sigma_1)^{-1}$
\[{r_{3,{\sigma _1}}} = {S_{{\sigma _1}{\sigma _2},{\tau _2}}}S_{{\sigma _1}{\sigma _2},{\sigma _2}}^{ - 1}S_{{\sigma _1},{\tau _2}}^{ - 1}\]
or
\[{S_{{\sigma _1},{\tau _2}}} = {S_{{\sigma _1}{\sigma _2},{\tau _2}}}S_{{\sigma _1}{\sigma _2},{\sigma _2}}^{ - 1}.\]
Take in attention the formula
$$
S_{{\sigma _1}{\sigma _2},{\sigma _2}} = S_{{\sigma _1},{\sigma _1}} S_{{\sigma _2}{\sigma _1},{\sigma _1}} S_{{\sigma _1},{\sigma _1}}^{-1},
$$
we can remove the generator
$$
{S_{{\sigma _1},{\tau _2}}} = {S_{{\sigma _1}{\sigma _2},{\tau _2}}} S_{{\sigma _1},{\sigma _1}} S_{{\sigma _2}{\sigma _1},{\sigma _1}}^{-1} S_{{\sigma _1},{\sigma _1}}^{-1}
$$

Conjugating by $(\sigma_2)^{-1}$
\[{r_{3,{\sigma _2}}} = {S_{{\sigma _2},{\sigma _2}}}{S_{1,{\tau _2}}}S_{{\sigma _2},{\tau _2}}^{ - 1} =
{S_{{\sigma _2},{\sigma _2}}} ( {S_{\sigma_2,{\tau _2}}}  S_{{\sigma _2},{\sigma _2}}^{ - 1} )  S_{{\sigma _2},{\tau _2}}^{ - 1} =1\]
and we have the relation
\[ {S_{{\sigma _2},{\sigma _2}}} {S_{{\sigma _2},{\tau _2}}} = {S_{{\sigma _2},{\tau _2}}} {S_{\sigma_2, {\sigma _2}}}.\]

Conjugating by $(\sigma _1 \sigma_2)^{-1}$
\[{r_{3,{\sigma _1}{\sigma _2}}} = {S_{{\sigma _1}{\sigma _2},{\sigma _2}}}{S_{{\sigma _1},{\tau _2}}}S_{{\sigma _1}{\sigma _2},{\tau _2}}^{ - 1} = 1\]
or
\[{S_{{\sigma _1}, {\sigma_1}}} {S_{{\sigma _2}{\sigma _1},{\sigma _1}}} {S_{{\sigma _1}, {\sigma_1}}^{-1}} \cdot {S_{{\sigma _1}{\sigma _2},{\tau _2}}} \cdot
{S_{{\sigma _1}, {\sigma_1}}} {S_{{\sigma _2}{\sigma _1},{\sigma _1}}^{-1}} {S_{{\sigma _1}, {\sigma_1}}^{-1}} = {S_{{\sigma _1}{\sigma _2},{\tau _2}}}.\]
This is relation in $SP_3$.

Conjugating by $(\sigma _2 \sigma_1)^{-1}$
\[{r_{3,{\sigma _2}{\sigma _1}}} = {S_{{\sigma _1}{\sigma _2}{\sigma _1},{\tau _2}}}S_{{\sigma _1}{\sigma _2}{\sigma _1},{\sigma _2}}^{ - 1}S_{{\sigma _2}{\sigma _1},{\tau _2}}^{ - 1} = 1.\]
We can remove the generator

\[{S_{{\sigma _2}{\sigma _1},{\tau _2}}} = {S_{{\sigma _1}{\sigma _2}{\sigma _1},{\tau _2}}}S_{{\sigma _1},{\sigma _1}}^{ - 1}.\]

Conjugating by $(\sigma_1 \sigma _2 \sigma_1)^{-1}$
\[{r_{3,{\sigma _1}{\sigma _2}{\sigma _1}}} = {S_{{\sigma _1}{\sigma _2}{\sigma _1},{\sigma _2}}}{S_{{\sigma _2}{\sigma _1},{\tau _2}}}S_{{\sigma _2}{\sigma _1},{\sigma _2}}^{ - 1}S_{{\sigma _1}{\sigma _2}{\sigma _1},{\tau _2}}^{ - 1} = 1\]
or

\[{r_{3,{\sigma _1}{\sigma _2}{\sigma _1}}} = {S_{{\sigma _1}{\sigma _2}{\sigma _1},{\sigma _2}}}{S_{{\sigma _2}{\sigma _1},{\tau _2}}}S_{{\sigma _1}{\sigma _2}{\sigma _1},{\tau _2}}^{ - 1} = 1,\]
i. e.

\[{S_{{\sigma _1}{\sigma _2}{\sigma _1},{\tau _2}}} = {S_{{\sigma _1},{\sigma _1}}}{S_{{\sigma _2}{\sigma _1},{\tau _2}}}.\]
Using the previous relation we get

\[{S_{{\sigma _1}{\sigma _2}{\sigma _1},{\tau _2}}}{S_{{\sigma _1},{\sigma _1}}} = {S_{{\sigma _1},{\sigma _1}}}{S_{{\sigma _1}{\sigma _2}{\sigma _1},{\tau _2}}}.\]

\medskip

\begin{lemma} \label{l3}
From relation $r_3 = \sigma_2 \tau_2 \sigma_2^{-1} \tau_2^{-1}$ follows 6 relations, applying which we can remove 3 generators:
$$
{S_{1,{\tau _2}}} = {S_{{\sigma _2},{\tau _2}}}S_{{\sigma _2},{\sigma _2}}^{ - 1},~~~{S_{{\sigma _1},{\tau _2}}} = {S_{{\sigma _1}{\sigma _2},{\tau _2}}} S_{{\sigma _1},{\sigma _1}} S_{{\sigma _2}{\sigma _1},{\sigma _1}}^{-1} S_{{\sigma _1},{\sigma _1}}^{-1},~~~
{S_{{\sigma _2}{\sigma _1},{\tau _2}}} = {S_{{\sigma _1}{\sigma _2}{\sigma _1},{\tau _2}}}S_{{\sigma _1},{\sigma _1}}^{ - 1},
$$
and we get 3 relations:
\[ {S_{{\sigma _2},{\sigma _2}}} {S_{{\sigma _2},{\tau _2}}} = {S_{{\sigma _2},{\tau _2}}} {S_{\sigma_2, {\sigma _2}}}.\]
$$
{S_{{\sigma _1}, {\sigma_1}}} {S_{{\sigma _2}{\sigma _1},{\sigma _1}}} {S_{{\sigma _1}, {\sigma_1}}^{-1}} \cdot {S_{{\sigma _1}{\sigma _2},{\tau _2}}} \cdot
{S_{{\sigma _1}, {\sigma_1}}} {S_{{\sigma _2}{\sigma _1},{\sigma _1}}^{-1}} {S_{{\sigma _1}, {\sigma_1}}^{-1}} = {S_{{\sigma _1}{\sigma _2},{\tau _2}}}
$$
$$
{S_{{\sigma _1}{\sigma _2}{\sigma _1},{\tau _2}}}{S_{{\sigma _1},{\sigma _1}}} = {S_{{\sigma _1},{\sigma _1}}}{S_{{\sigma _1}{\sigma _2}{\sigma _1},{\tau _2}}}.
$$
\end{lemma}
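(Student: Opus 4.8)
The plan is to apply the Reidemeister--Schreier rewriting process of Section~\ref{method} to the single defining relation $r_3 = \sigma_2 \tau_2 \sigma_2^{-1} \tau_2^{-1}$, letting $\lambda$ range over all six elements of $\Lambda_3 = \{1, \sigma_1, \sigma_2, \sigma_1\sigma_2, \sigma_2\sigma_1, \sigma_1\sigma_2\sigma_1\}$. For each $\lambda$ I would form the word $\lambda\, r_3\, \lambda^{-1}$, reduce it in $SG_3$, and then apply the rewriting map $\tau$: scanning the reduced word letter by letter, at the $j$-th letter $a_j^{\varepsilon_j}$ one records the generator $S_{k_j, a_j}^{\varepsilon_j}$, where $k_j$ represents the $(j-1)$-th initial segment if $\varepsilon_j = 1$ and the $j$-th initial segment if $\varepsilon_j = -1$. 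This yields, for each $\lambda$, one relation $r_{3,\lambda} = 1$ among the $S$-generators, hence six relations in total.

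Next I would simplify each $r_{3,\lambda}$ using the generators already shown to be trivial, namely $S_{1,\sigma_1}$, $S_{1,\sigma_2}$, $S_{\sigma_1,\sigma_2}$, $S_{\sigma_2,\sigma_1}$, $S_{\sigma_1\sigma_2,\sigma_1}$, $S_{\sigma_2\sigma_1,\sigma_2}$, and together with the identifications and eliminations established in Lemmas~\ref{l1} and~\ref{l2}. The key substitutions here are $S_{\sigma_1\sigma_2\sigma_1,\sigma_1} = S_{\sigma_2,\sigma_2}$, $S_{\sigma_1\sigma_2\sigma_1,\sigma_2} = S_{\sigma_1,\sigma_1}$, and above all $S_{\sigma_1\sigma_2,\sigma_2} = S_{\sigma_1,\sigma_1}\, S_{\sigma_2\sigma_1,\sigma_1}\, S_{\sigma_1,\sigma_1}^{-1}$ from Lemma~\ref{l2}, which must be expanded before two of the relations collapse to the form printed in the statement.

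The six relations then sort into two kinds. Three of them, coming from $\lambda = 1$, $\lambda = \sigma_1$, and $\lambda = \sigma_2\sigma_1$, each solve for a single $\tau_2$-generator in terms of the others, giving $S_{1,\tau_2}$, $S_{\sigma_1,\tau_2}$, and $S_{\sigma_2\sigma_1,\tau_2}$ respectively; by Tietze transformations these three generators are deleted. The remaining three relations, arising from $\lambda = \sigma_2$, $\lambda = \sigma_1\sigma_2$, and $\lambda = \sigma_1\sigma_2\sigma_1$, survive as the genuine defining relations listed in the conclusion, the first and third expressing commutation and the middle one a conjugation rule for $S_{\sigma_1\sigma_2,\tau_2}$.

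The main obstacle is not conceptual but bookkeeping: one must track the initial-segment representatives $k_j$ exactly as each word $\lambda\, r_3\, \lambda^{-1}$ is scanned, since a single misidentification of the coset $k_j$ lands in changes a generator subscript and corrupts the relation; and one must apply the earlier eliminations in the correct order, expanding the conjugated form of $S_{\sigma_1\sigma_2,\sigma_2}$ at the right moment. No new idea beyond the consistency of the rewriting with the substitutions from Lemmas~\ref{l1} and~\ref{l2} is required, so the verification is a finite, direct computation over the six coset representatives.
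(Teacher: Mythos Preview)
Your proposal is correct and follows essentially the same approach as the paper: it applies the Reidemeister--Schreier rewriting to $\lambda r_3 \lambda^{-1}$ for each $\lambda \in \Lambda_3$, uses the trivial generators and the substitutions from Lemmas~\ref{l1} and~\ref{l2} (in particular $S_{\sigma_1\sigma_2,\sigma_2} = S_{\sigma_1,\sigma_1} S_{\sigma_2\sigma_1,\sigma_1} S_{\sigma_1,\sigma_1}^{-1}$ and $S_{\sigma_1\sigma_2\sigma_1,\sigma_2} = S_{\sigma_1,\sigma_1}$), and correctly identifies that $\lambda \in \{1, \sigma_1, \sigma_2\sigma_1\}$ yield the three eliminations while $\lambda \in \{\sigma_2, \sigma_1\sigma_2, \sigma_1\sigma_2\sigma_1\}$ yield the three surviving relations. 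The paper carries out exactly this computation, feeding the eliminated generators back into the remaining relations to obtain the final forms.
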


\textbf{4)} Take the relation $r_4 = \sigma_1 \sigma_2 \tau_1 \sigma_2^{-1} \sigma_1^{-1} \tau_2^{-1}$ and rewrite in the new generators:

\[{r_{4,1}} = {S_{{\sigma _1}{\sigma _2},{\tau _1}}}S_{1,{\tau _2}}^{ - 1} = 1\]
or
\[{S_{1,{\tau _2}}} = {S_{{\sigma _1}{\sigma _2},{\tau _1}}}.\]
Using the formulas for these generators from Lemmas \ref{l1} and \ref{l3} we get
\[{S_{{\sigma _2},{\tau _2}}} {S_{{\sigma _2},{\sigma _2}}^{-1}} = {S_{{\sigma _1}{\sigma _2}{\sigma _1},{\tau _1}}}
{S_{{\sigma _1}{\sigma _2}{\sigma _1},{\sigma _1}}^{-1}}.\]
Since
$$
{S_{{\sigma _1}{\sigma _2}{\sigma _1},{\sigma _1}}} = {S_{{\sigma _2}, {\sigma _2}}},
$$
we get the relation
$$
{S_{{\sigma _1}{\sigma _2}{\sigma _1},{\tau _1}}} = {S_{{\sigma _2},{\tau _2}}}.
$$
Applying this relation, we can remove ${S_{{\sigma _1}{\sigma _2}{\sigma _1},{\tau _1}}}$ from the generating set of $SP_3$.

Conjugating by $(\sigma_1)^{-1}$

\[{r_{4,{\sigma _1}}} = {S_{{\sigma _1},{\sigma _1}}}{S_{{\sigma _2},{\tau _1}}}S_{{\sigma _1}{\sigma _2}{\sigma _1},{\sigma _2}}^{ - 1} S_{{\sigma _1},{\tau _2}}^{ - 1} =
S_{{\sigma _1},{\sigma _1}} \cdot S_{{\sigma _2}{\sigma _1}, {\tau _1}}S_{{\sigma _2}{\sigma _1},{\sigma _1}}^{-1} \cdot S_{{\sigma _1},{\sigma _1}}^{ - 1} \cdot S_{{\sigma _1},{\sigma _1}}  S_{{\sigma _2}{\sigma _1},{\sigma _1}}  S_{{\sigma _1},{\sigma _1}}^{ - 1}
S_{{\sigma _1}{\sigma _2}, {\tau_2}}^{ - 1}= 1\]
and we can remove the generator
$$
S_{{\sigma _1}{\sigma _2}, {\tau _2}} = S_{{\sigma _1},{\sigma _1}} S_{{\sigma _2}{\sigma _1}, {\tau _1}} S_{{\sigma _1},{\sigma _1}}^{ - 1}.
$$

Next relation

\[{r_{4,{\sigma _2}}} = {S_{{\sigma _1}{\sigma _2}{\sigma _1},{\tau _1}}} S_{{\sigma _2},{\tau _2}}^{ - 1} = {S_{{\sigma _2},{\tau _2}}} S_{{\sigma _2},{\tau _2}}^{ - 1} = 1\]
is the trivial relation.

Also, it is easy to see that ${r_{4,{\sigma _1}{\sigma _2}}} = 1$ is the trivial relation.

Next relation
\[{r_{4,{\sigma _1}{\sigma _2}}} = {S_{{\sigma _2}{\sigma _1},{\sigma _1}}} S_{{\sigma _2}, {\sigma _2}} {S_{{1}, {\tau _1}}}
S_{{\sigma _1}{\sigma _2}, {\sigma _2}}^{ - 1} S_{{\sigma _1}{\sigma _2}{\sigma _1}, {\sigma _1}}^{ - 1} S_{{\sigma _2}{\sigma _1},{\tau _2}}^{ - 1} = 1.\]
Using formulas from the previous lemmas, we get
\[{S_{{\sigma _2}{\sigma _1}, {\sigma _1}}}  {S_{{\sigma _2},{\sigma _2}}} \cdot {S_{{\sigma _1},{\tau _1}}} S_{{\sigma _1},{\sigma _1}}^{ - 1} \cdot
S_{{\sigma _1},{\sigma _1}} S_{{\sigma _2}{\sigma _1}, {\sigma _1}}^{ - 1}  S_{{\sigma _1}, {\sigma _1}}^{ - 1}
S_{{\sigma _2}, {\sigma _2}}^{ - 1} \cdot
{S_{{\sigma _1}, {\sigma _1}}} {S_{{\sigma _1}{\sigma _2}{\sigma _1},{\tau _2}}^{-1}} = 1.\]

Applying this relation we can remove
$$
S_{{\sigma _1}{\sigma _2}{\sigma _1},{\tau _2}} = {S_{{\sigma _2}{\sigma _1}, {\sigma _1}}}  {S_{{\sigma _2},{\sigma _2}}}  {S_{{\sigma _1},{\tau _1}}} S_{{\sigma _2}{\sigma _1}, {\sigma _1}}^{ - 1}  S_{{\sigma _1}, {\sigma _1}}^{ - 1}
S_{{\sigma _2}, {\sigma _2}}^{ - 1}
{S_{{\sigma _1}, {\sigma _1}}}.
$$

The relation $r_{4,{\sigma _1}{\sigma _2}{\sigma _1}} = 1$ gives the trivial relation.

Hence, we have proven

\medskip

\begin{lemma} \label{l4}
From relation $r_4 = \sigma_1 \sigma_2 \tau_1 \sigma_2^{-1} \sigma_1^{-1} \tau_2^{-1}$ follows 3 non-trivial relations, applying which we can remove 3 generators:
$$
{S_{{\sigma _1}{\sigma _2}{\sigma _1},{\tau _1}}} = {S_{{\sigma _2},{\tau _2}}},~~~S_{{\sigma _1}{\sigma _2}, {\tau _2}} = S_{{\sigma _1},{\sigma _1}} S_{{\sigma _2}{\sigma _1}, {\tau _1}} S_{{\sigma _1},{\sigma _1}}^{ - 1},
$$
$$
S_{{\sigma _1}{\sigma _2}{\sigma _1},{\tau _2}} = {S_{{\sigma _2}{\sigma _1}, {\sigma _1}}}  {S_{{\sigma _2},{\sigma _2}}}  {S_{{\sigma _1},{\tau _1}}} S_{{\sigma _2}{\sigma _1}, {\sigma _1}}^{ - 1}  S_{{\sigma _1}, {\sigma _1}}^{ - 1}
S_{{\sigma _2}, {\sigma _2}}^{ - 1}
{S_{{\sigma _1}, {\sigma _1}}}.
$$
\end{lemma}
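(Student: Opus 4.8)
The plan is to apply the Reidemeister--Schreier rewriting process of Section \ref{method} to the defining relation $r_4 = \sigma_1 \sigma_2 \tau_1 \sigma_2^{-1} \sigma_1^{-1} \tau_2^{-1}$ (equivalently $\sigma_1 \sigma_2 \tau_1 = \tau_2 \sigma_1 \sigma_2$), exactly as was done for $r_1$, $r_2$ and $r_3$ in Lemmas \ref{l1}--\ref{l3}. By \cite[Theorem 2.9]{KMS} the relations contributed by $r_4$ to $SP_3$ are the words $r_{4,\lambda} = \tau(\lambda r_4 \lambda^{-1})$, one for each of the six coset representatives $\lambda \in \Lambda_3 = \{1, \sigma_1, \sigma_2, \sigma_1\sigma_2, \sigma_2\sigma_1, \sigma_1\sigma_2\sigma_1\}$.

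First I would process each conjugate syllable by syllable: for a positive occurrence $a_j$ I read off the coset representative of the initial segment ending just before it, and for a negative occurrence $a_j^{-1}$ the representative of the initial segment that includes $a_j^{-1}$; this assigns to each syllable the correct generator $S_{k_j,a_j}^{\pm 1}$. Since every letter of $\lambda r_4 \lambda^{-1}$ maps under $\pi$ to $(1,2)$ or $(2,3)$, the required representatives are read directly from $\Lambda_3$. Carrying this out produces $r_{4,1}$, $r_{4,\sigma_1}$, $r_{4,\sigma_2}$, $r_{4,\sigma_1\sigma_2}$, $r_{4,\sigma_2\sigma_1}$ and $r_{4,\sigma_1\sigma_2\sigma_1}$ as words in the generators $S_{\lambda,a}$.

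Next I would substitute the eliminations already obtained in Lemmas \ref{l1}, \ref{l2} and \ref{l3} --- in particular the formulas for $S_{1,\tau_1}$, $S_{\sigma_2,\tau_1}$, $S_{1,\tau_2}$, $S_{\sigma_1,\tau_2}$, $S_{\sigma_1\sigma_2,\sigma_2}$ and $S_{\sigma_1\sigma_2\sigma_1,\sigma_1}$ --- into each $r_{4,\lambda}$. After this substitution the three relations $r_{4,\sigma_2}$, $r_{4,\sigma_1\sigma_2}$ and $r_{4,\sigma_1\sigma_2\sigma_1}$ collapse to the trivial relation, while the three surviving relations $r_{4,1}$, $r_{4,\sigma_1}$ and $r_{4,\sigma_2\sigma_1}$ rearrange precisely into the stated elimination formulas for $S_{\sigma_1\sigma_2\sigma_1,\tau_1}$, $S_{\sigma_1\sigma_2,\tau_2}$ and $S_{\sigma_1\sigma_2\sigma_1,\tau_2}$.

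The routine but error-prone part --- and the main obstacle --- is the correct assignment of the representatives $k_j$ at each syllable of each conjugate, since a single misread initial segment propagates into a wrong generator and derails all subsequent simplification. Once the six rewritten words are written down correctly, the rest is only a matter of inserting the known formulas of Lemmas \ref{l1}--\ref{l3} and collecting terms, which yields the claimed three non-trivial relations and three removable generators.
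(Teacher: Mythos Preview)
Your proposal is correct and follows essentially the same approach as the paper: the paper likewise rewrites each $r_{4,\lambda}$ with the Reidemeister--Schreier process, substitutes the eliminations from Lemmas \ref{l1}--\ref{l3}, and finds that the non-trivial relations arise from $\lambda = 1$, $\sigma_1$, and $\sigma_2\sigma_1$, while $\lambda = \sigma_2$, $\sigma_1\sigma_2$, $\sigma_1\sigma_2\sigma_1$ yield trivial relations. (Note that in the paper's derivation the relation labelled $r_{4,\sigma_1\sigma_2}$ producing the formula for $S_{\sigma_1\sigma_2\sigma_1,\tau_2}$ is in fact $r_{4,\sigma_2\sigma_1}$, as you correctly identified.)
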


\medskip

\textbf{5)} Take the relation $r_5 = \sigma_2 \sigma_1 \tau_2 \sigma_1^{-1} \sigma_2^{-1} \tau_1^{-1}$ and rewrite in the new generators:

\[{r_{5,1}} = S_{{\sigma _2}{\sigma _1},{\tau _2}} S_{1,{\tau _1}}^{ - 1} =
S_{{\sigma _1}{\sigma _2}{\sigma _1}, {\tau _2}} S_{{\sigma _1},{\tau _1}}^{ - 1}= 1.\]
Using the formulas from the previous lemmas we get relation
$$
(a_{13} a_{23}) S_{{\sigma _1},{\tau _1}} = S_{{\sigma _1},{\tau _1}} (a_{13} a_{23}).
$$

Relation

\[{r_{5,{\sigma _1}}} = S_{{\sigma _1}{\sigma _2}{\sigma _1},{\tau _2}}
S_{{\sigma _1},{\tau _1}}^{ - 1} = 1\]
is the same as the previous relation.

Relation

\[{r_{5,{\sigma _2}}} = {S_{{\sigma _2},{\sigma _2}}} {S_{\sigma_1,{\tau _2}}} S_{{\sigma _1}{\sigma _2}{\sigma _1}, {\sigma _1}}^{ - 1} S_{{\sigma _2}, {\tau _1}}^{ - 1}  = {S_{{\sigma _2},{\sigma _2}}} {S_{{\sigma _1},{\tau _2}}} S_{{\sigma _2}, {\sigma _2}}^{ - 1} S_{{\sigma _2}, {\tau _1}}^{ - 1}.\]

Using the formulas from the previous lemmas we get relation
$$
S_{{\sigma _2} {\sigma _1}, {\tau _1}} a_{13}^{-1} = a_{23} a_{12} (S_{{\sigma _2} {\sigma _1}, {\tau _1}} a_{13}^{-1}) a_{12}^{-1} a_{23}^{-1}.
$$

From relation ${r_{5,{\sigma _1}{\sigma _2}}} =  1$ follows relation

$$
a_{12}^{-1} S_{{\sigma _2}, {\tau _2}} a_{12} = a_{13} S_{{\sigma _2}, {\tau _2}} a_{13}^{-1}.
$$

From $r_{5,{\sigma _2}{\sigma _1}} = 1$ follows relation
$$
a_{12} S_{{\sigma _2}{\sigma _1}, {\tau _1}} a_{12}^{-1} = a_{23}^{-1} S_{{\sigma _2}{\sigma _1}, {\tau _1}}  a_{23}.
$$

From ${r_{5,{\sigma _1}{\sigma _2}{\sigma _1}}} =  1$ follows relation
$$
a_{12}^{-1} S_{{\sigma _2}, {\tau _2}} a_{12} = a_{13} S_{{\sigma _2}, {\tau _2}}  a_{13}^{-1}
$$
and we see that it is the same relation which follows from ${r_{5,{\sigma _1}{\sigma _2}}} =  1$.

Hence we have proven

\medskip

\begin{lemma} \label{l5}
From relation $r_5 = \sigma_2 \sigma_1 \tau_2 \sigma_1^{-1} \sigma_2^{-1} \tau_1^{-1}$ follows  relations:
$$
(a_{13} a_{23}) S_{{\sigma _1},{\tau _1}} = S_{{\sigma _1},{\tau _1}} (a_{13} a_{23}),
$$
$$
S_{{\sigma _2} {\sigma _1}, {\tau _1}} a_{13}^{-1} = a_{23} a_{12} (S_{{\sigma _2} {\sigma _1}, {\tau _1}} a_{13}^{-1}) a_{12}^{-1} a_{23}^{-1},
$$
$$
a_{12}^{-1} S_{{\sigma _2}, {\tau _2}} a_{12} = a_{13} S_{{\sigma _2}, {\tau _2}}a_{13}^{-1},
$$
$$
a_{12} S_{{\sigma _2}{\sigma _1}, {\tau _1}} a_{12}^{-1} = a_{23}^{-1} S_{{\sigma _2}{\sigma _1}, {\tau _1}}  a_{23}.
$$
\end{lemma}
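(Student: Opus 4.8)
The plan is to follow the same mechanical scheme used in Lemmas~\ref{l1}--\ref{l4}. I would conjugate the defining relation $r_5 = \sigma_2\sigma_1\tau_2\sigma_1^{-1}\sigma_2^{-1}\tau_1^{-1}$ by each of the six representatives in $\Lambda_3 = \{1, \sigma_1, \sigma_2, \sigma_1\sigma_2, \sigma_2\sigma_1, \sigma_1\sigma_2\sigma_1\}$ and apply the rewriting process $\tau$ to obtain $r_{5,\lambda} = \tau(\lambda r_5 \lambda^{-1})$ as a word in the generators $S_{\mu,a}$. The first task for each $\lambda$ is to list the coset representatives of all initial segments of the word $\lambda\,\sigma_2\sigma_1\tau_2\sigma_1^{-1}\sigma_2^{-1}\tau_1^{-1}\,\lambda^{-1}$; the only point to keep in mind is that $\pi(\tau_i) = \pi(\sigma_i) = (i,i+1)$, so $\overline{w\tau_i} = \overline{w\sigma_i}$ and the representatives are read off exactly as for the classical relation $r_2$. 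Feeding these into the formula for $\tau$ and deleting the generators that vanish, namely $S_{1,\sigma_1}, S_{1,\sigma_2}, S_{\sigma_1,\sigma_2}, S_{\sigma_2,\sigma_1}, S_{\sigma_1\sigma_2,\sigma_1}$ and $S_{\sigma_2\sigma_1,\sigma_2}$ (the last by Lemma~\ref{l2}), produces six raw relations.

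The second step is to rewrite these raw relations in the reduced generating set. Each eliminated generator of the form $S_{\mu,\tau_i}$ from Lemmas~\ref{l1}, \ref{l3} and \ref{l4} is back-substituted by its expression in the surviving generators, and I would abbreviate $a_{12} = S_{\sigma_1,\sigma_1}$, $a_{13} = S_{\sigma_2\sigma_1,\sigma_1}$, $a_{23} = S_{\sigma_2,\sigma_2}$. For $\lambda = 1$ the raw relation $S_{\sigma_2\sigma_1,\tau_2}S_{1,\tau_1}^{-1} = 1$ first becomes $S_{\sigma_1\sigma_2\sigma_1,\tau_2} = S_{\sigma_1,\tau_1}$; inserting the Lemma~\ref{l4} formula $S_{\sigma_1\sigma_2\sigma_1,\tau_2} = a_{13}a_{23}\,S_{\sigma_1,\tau_1}\,a_{13}^{-1}a_{12}^{-1}a_{23}^{-1}a_{12}$ then yields the first displayed relation. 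The conjugates by $\sigma_2$, $\sigma_2\sigma_1$ and $\sigma_1\sigma_2$ are treated identically and give, respectively, the second, fourth and third relations of the statement.

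A simplification that shortens the work is that the six conjugates collapse in pairs: $r_{5,\sigma_1}$ reduces to the same relation as $r_{5,1}$, and $r_{5,\sigma_1\sigma_2\sigma_1}$ to the same relation as $r_{5,\sigma_1\sigma_2}$, so only four genuinely new relations survive, which are exactly those listed.

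The step I expect to be the main obstacle is the final simplification of the back-substituted words, not the rewriting itself. After inserting the long Lemma~\ref{l4} expressions, each relation is a word in $a_{12}, a_{13}, a_{23}$ together with a single generator $S_{\mu,\tau_i}$, and this must be massaged into the clean conjugation form of the statement. This is where the pure braid relations of $P_3$ recorded in the Corollary enter: concretely, the fact that $a_{23}$ commutes with $a_{12}a_{13}$ (equivalently, that $a_{12}a_{13}a_{23}$ is central in $P_3$) is what collapses the conjugating factor $a_{13}a_{23}\,(\cdot)\,a_{13}^{-1}a_{12}^{-1}a_{23}^{-1}a_{12}$ to plain conjugation by $a_{13}a_{23}$. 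Keeping the surviving factors in the correct order while applying these $P_3$-identities is the only delicate point; everything else is routine bookkeeping.
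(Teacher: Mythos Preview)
Your proposal is correct and follows essentially the same approach as the paper: conjugate $r_5$ by each coset representative in $\Lambda_3$, apply the rewriting process, back-substitute the generators eliminated in Lemmas~\ref{l1}--\ref{l4}, and observe that the six conjugates collapse in pairs ($r_{5,\sigma_1}$ with $r_{5,1}$, and $r_{5,\sigma_1\sigma_2\sigma_1}$ with $r_{5,\sigma_1\sigma_2}$) to yield exactly the four displayed relations. Your remark that the $P_3$-identity $a_{12}^{-1}a_{23}^{-1}a_{12} = a_{13}a_{23}^{-1}a_{13}^{-1}$ (equivalently, centrality of $a_{12}a_{13}a_{23}$) is the key to the final simplifications is precisely the step the paper leaves implicit under ``using the formulas from the previous lemmas.''
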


\medskip

Let us introduce the following notations
$$
a_{12} = S_{{\sigma _1},{\sigma _1}} = \sigma_1^2,~~~a_{13} = S_{{\sigma _2}{\sigma _1}, {\sigma _1}}  = \sigma_2 \sigma_1^2 \sigma_2^{-1},~~~a_{23} = S_{{\sigma _2},{\sigma _2}} = \sigma_2^2,
$$
$$
b_{12} = S_{{\sigma _1}, {\tau _1}} = \sigma _1 \tau _1,~~~{b_{13}} = {S_{{\sigma _2}{\sigma _1},{\tau _1}}} = {\sigma _2}{\sigma _1}{\tau _1}\sigma _2^{ - 1},~~~b_{23} = S_{{\sigma _2}, {\tau _2}} = \sigma _2 \tau _2.
$$

Then we can express other generators of $SP_3$.
\begin{lemma} The following equalities hold
\begin{align*}
& {S_{1,{\tau _1}}} = {\tau _1} \cdot \sigma _1^{ - 1} = {b_{12}}a_{12}^{ - 1},\\
& {S_{1,{\tau _2}}} = {\tau _2} \cdot \sigma _2^{ - 1} = {b_{23}}a_{23}^{ - 1},\\
& {S_{{\sigma _1},{\sigma _1}}} = \sigma _1^2 = {a_{12}},\\
&{S_{{\sigma _1},{\tau _1}}} = {\sigma _1}{\tau _1} = {b_{12}},\\
&{S_{{\sigma _1},{\tau _2}}} = {\sigma _1}{\tau _2}\sigma _2^{ - 1}\sigma _1^{ - 1} = {a_{23}^{-1}}{b_{13}}a_{13}^{ - 1}a_{23}, \\
&{S_{{\sigma _2},{\sigma _2}}} = \sigma _2^2 = {a_{23}}, \\
& {S_{{\sigma _2},{\tau _1}}} = {\sigma _2}{\tau _1}\sigma _1^{ - 1}\sigma _2^{ - 1} = {b_{13}}a_{13}^{ - 1},\\
& {S_{{\sigma _2},{\tau _2}}} = {\sigma _2}{\tau _2} = {b_{23}},\\
& {S_{{\sigma _1}{\sigma _2},{\sigma _2}}} = {\sigma _1}\sigma _2^2\sigma _1^{ - 1} = {a_{23}^{ - 1}}{a_{13}}a_{23},\\
& {S_{{\sigma _1}{\sigma _2},{\tau _1}}} = {\sigma _1}{\sigma _2}{\tau _1}\sigma _1^{ - 1}\sigma _2^{ - 1}\sigma _1^{ - 1} = {b_{23}}a_{23}^{ - 1},\\
&{S_{{\sigma _1}{\sigma _2},{\tau _2}}} = {\sigma _1}{\sigma _2}{\tau _2}\sigma _1^{ - 1} = {a_{23}^{ - 1}}{b_{13}}a_{23},\\
& {S_{{\sigma _2}{\sigma _1},{\sigma _1}}} = {\sigma _2}\sigma _1^2\sigma _2^{ - 1} = {a_{13}},\\
& {S_{{\sigma _2}{\sigma _1},{\tau _1}}} = {\sigma _2}{\sigma _1}{\tau _1}\sigma _2^{ - 1} = {b_{13}},\\
& {S_{{\sigma _2}{\sigma _1},{\tau _2}}} = {\sigma _2}{\sigma _1}{\tau _2}\sigma _1^{ - 1}\sigma _2^{ - 1}\sigma _1^{ - 1} = {b_{12}}a_{12}^{ - 1},\\
&{S_{{\sigma _1}{\sigma _2}{\sigma _1},{\sigma _1}}} = {\sigma _1}{\sigma _2}\sigma _1^2\sigma _2^{ - 1}\sigma _1^{ - 1} = {a_{23}},\\
& {S_{{\sigma _1}{\sigma _2}{\sigma _1},{\sigma _2}}} = {\sigma _1}{\sigma _2}{\sigma _1}{\sigma _2}\sigma _1^{ - 1}\sigma _2^{ - 1} = {a_{12}},\\
&{S_{{\sigma _1}{\sigma _2}{\sigma _1},{\tau _1}}} = {\sigma _1}{\sigma _2}{\sigma _1}{\tau _1}\sigma _2^{ - 1}\sigma _1^{ - 1} = {b_{23}},\\
& {S_{{\sigma _1}{\sigma _2}{\sigma _1},{\tau _2}}} = {\sigma _1}{\sigma _2}{\sigma _1}{\tau _2}\sigma _1^{ - 1}\sigma _2^{ - 1} = {b_{12}}.\\
\end{align*}
\end{lemma}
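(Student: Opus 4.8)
The plan is to verify each of the eighteen equalities as an identity in the overgroup $SG_3 \supset SP_3$. This is legitimate because every generator $S_{\lambda,a}$ was already written as an explicit word in $\sigma_1,\sigma_2,\tau_1,\tau_2$ at the beginning of this subsection, and each of $a_{12},a_{13},a_{23},b_{12},b_{13},b_{23}$ is by definition one of those words. Hence every claimed equality reduces to checking that two words in $\sigma_i,\tau_i$ coincide modulo the five defining relations of $SG_3$. Six of the equalities, namely those for $S_{\sigma_1,\sigma_1}$, $S_{\sigma_2,\sigma_2}$, $S_{\sigma_2\sigma_1,\sigma_1}$, $S_{\sigma_1,\tau_1}$, $S_{\sigma_2\sigma_1,\tau_1}$ and $S_{\sigma_2,\tau_2}$, hold by the very definition of $a_{ij}$ and $b_{ij}$ and require no argument.

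For the twelve remaining generators I would exploit the elimination identities already recorded during the Reidemeister--Schreier reduction: $S_{1,\tau_1}$ and $S_{\sigma_2,\tau_1}$ come from Lemma~\ref{l1}; the generators $S_{\sigma_1\sigma_2\sigma_1,\sigma_1}$, $S_{\sigma_1\sigma_2\sigma_1,\sigma_2}$ and $S_{\sigma_1\sigma_2,\sigma_2}$ from Lemma~\ref{l2}; $S_{1,\tau_2}$ from Lemma~\ref{l3}; and $S_{\sigma_1\sigma_2\sigma_1,\tau_1}$, $S_{\sigma_1\sigma_2,\tau_1}$ from Lemma~\ref{l4}. In each of these cases I substitute the word forms and collapse the result to the displayed normal form, using the commutation relations $\sigma_1\tau_1=\tau_1\sigma_1$, $\sigma_2\tau_2=\tau_2\sigma_2$ together with the braid relation in the convenient form $\sigma_1\sigma_2\sigma_1^{-1}=\sigma_2^{-1}\sigma_1\sigma_2$. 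The clean conjugated shape of several right-hand sides additionally requires the $P_3$-relations $a_{12}a_{13}a_{12}^{-1}=a_{23}^{-1}a_{13}a_{23}$ and $a_{23}a_{12}a_{13}=a_{12}a_{13}a_{23}$, and the conjugation rules for $b_{13}$ and $b_{23}$ from Lemma~\ref{l5}, to rewrite an expression such as $a_{12}b_{13}a_{13}^{-1}a_{12}^{-1}$ as $a_{23}^{-1}b_{13}a_{13}^{-1}a_{23}$.

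The main obstacle, and the only genuinely delicate point, is consistency. Several generators were eliminated more than once through different relations: for instance $S_{1,\tau_2}$ appears in both Lemma~\ref{l3} and relation $r_{4,1}$, and $S_{\sigma_1\sigma_2,\tau_2}$ in both Lemmas~\ref{l3} and \ref{l4}. The competing expressions must therefore be reconciled in $SP_3$. I expect the $\tau_2$-generators $S_{\sigma_1,\tau_2}$, $S_{\sigma_1\sigma_2,\tau_2}$, $S_{\sigma_2\sigma_1,\tau_2}$ and $S_{\sigma_1\sigma_2\sigma_1,\tau_2}$ to be the hard cases, since their stated forms (such as $a_{23}^{-1}b_{13}a_{13}^{-1}a_{23}$) emerge only after invoking the mixed relations $\sigma_1\sigma_2\tau_1=\tau_2\sigma_1\sigma_2$ and $\sigma_2\sigma_1\tau_2=\tau_1\sigma_2\sigma_1$. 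For these I would sidestep the reconciliation problem by verifying the identity directly at the level of $\sigma_i,\tau_i$ words: for example, substituting the definitions into $a_{23}^{-1}b_{13}a_{13}^{-1}a_{23}$ and reducing gives $\sigma_2^{-1}\tau_1\sigma_1^{-1}\sigma_2$, which one checks equals the explicit word $\sigma_1\tau_2\sigma_2^{-1}\sigma_1^{-1}$ for $S_{\sigma_1,\tau_2}$ using the fifth mixed relation and the braid relation. Because such a direct computation is carried out inside $SG_3$, it automatically guarantees consistency among all the elimination routes, so completing these four checks finishes the proof.
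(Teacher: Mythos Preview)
Your proposal is correct and matches what the paper does, which is essentially nothing: the paper states this lemma without proof, treating it as a bookkeeping summary of the eliminations already carried out in Lemmas~\ref{l1}--\ref{l4} together with the definitions of $a_{ij}$ and $b_{ij}$. Your plan to substitute the definitions and verify each equality as a word identity in $SG_3$ (using the braid relation, the commutations $\sigma_i\tau_i=\tau_i\sigma_i$, and the mixed relations $\sigma_i\sigma_{i\pm1}\tau_i=\tau_{i\pm1}\sigma_i\sigma_{i\pm1}$) is exactly the intended mechanism, and your observation that the direct $SG_3$-verification automatically reconciles the multiple elimination routes is a nice bonus the paper does not make explicit.
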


The group $SP_3$ has the following presentation.

\begin{theorem} \label{t1}
The singular pure braid group $SP_3$ is generated by elements
$$
a_{12},~~a_{13},~~a_{23},~~b_{12},~~b_{13},~~b_{23},
$$
and is defined by relations:
$$
a_{12} a_{13} a_{12}^{-1} = a_{23}^{-1} a_{13} a_{23},~~~a_{12} a_{23} a_{12}^{-1} = a_{23}^{-1} a_{13}^{-1} a_{23} a_{13} a_{23},
$$
-- what are relations in $P_3$;
$$
a_{12} b_{12} = b_{12} a_{12}
$$
-- what is the relation in $SP_2$;

$$
a_{13} b_{13} = b_{13} a_{13},
$$
$$
a_{23} b_{23} = b_{23} a_{23},
$$
$$
b_{12} (a_{13} a_{23}) b_{12}^{-1} = a_{13} a_{23},
$$
$$
a_{12} b_{13} a_{12}^{-1} =  a_{23}^{-1} b_{13} a_{23},
$$
$$
a_{12} b_{23} a_{12}^{-1} = a_{23}^{-1} a_{13}^{-1} b_{23} a_{13} a_{23}.
$$
\end{theorem}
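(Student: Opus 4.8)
The plan is to assemble the presentation directly from the computations already performed, reading each elimination of a Schreier generator as a Tietze transformation. By the Reidemeister--Schreier method of Section \ref{method}, the group $SP_3$ is generated by the nineteen nontrivial Schreier generators $S_{\lambda,a}$ displayed above and is defined by the thirty relations $r_{\mu,\lambda}=\tau(\lambda\,r_\mu\,\lambda^{-1})$, where $r_1,\dots,r_5$ are the five defining relations of $SG_3$ and $\lambda$ ranges over $\Lambda_3$. Lemmas \ref{l1}--\ref{l5} record precisely how each family $\{r_{\mu,\lambda}\}_{\lambda\in\Lambda_3}$ rewrites, so the theorem is obtained by organizing their output.

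First I would collect the eliminations. Lemma \ref{l1} expresses three generators as words in the others, Lemma \ref{l2} four, Lemma \ref{l3} three, and Lemma \ref{l4} three, so thirteen of the nineteen generators are deleted by Tietze transformations, each of the form $S=(\text{word in the survivors})$. The six survivors are exactly those renamed in the preceding lemma as $a_{12},a_{13},a_{23},b_{12},b_{13},b_{23}$, and that lemma also records every original $S_{\lambda,a}$ as an explicit word in these six; this establishes that the six generate $SP_3$.

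Next I would collect the surviving relations and rewrite them in the new generators via the substitution table of the preceding lemma. Lemma \ref{l2} yields verbatim the two $P_3$ relations $a_{12}a_{13}a_{12}^{-1}=a_{23}^{-1}a_{13}a_{23}$ and $a_{12}a_{23}a_{12}^{-1}=a_{23}^{-1}a_{13}^{-1}a_{23}a_{13}a_{23}$, while Lemma \ref{l1} yields $a_{12}b_{12}=b_{12}a_{12}$, $a_{13}b_{13}=b_{13}a_{13}$ and $a_{23}b_{23}=b_{23}a_{23}$. I would then verify that the three relations of Lemma \ref{l3} are redundant: two coincide with $a_{23}b_{23}=b_{23}a_{23}$ and $a_{12}b_{12}=b_{12}a_{12}$, and the middle one collapses to $a_{13}b_{13}=b_{13}a_{13}$ after substituting $S_{\sigma_1\sigma_2,\tau_2}=a_{23}^{-1}b_{13}a_{23}$ and applying the first $P_3$ relation. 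Lemma \ref{l4} contributes no surviving relation.

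The remaining and most delicate input is Lemma \ref{l5}. After substitution its first relation becomes $b_{12}(a_{13}a_{23})b_{12}^{-1}=a_{13}a_{23}$, and its second and fourth both reduce to $a_{12}b_{13}a_{12}^{-1}=a_{23}^{-1}b_{13}a_{23}$ once the trailing factor $a_{13}^{-1}$ is cancelled using $a_{13}b_{13}=b_{13}a_{13}$ together with the $P_3$ consequence that $a_{23}a_{12}$ commutes with $a_{13}$. I expect the main obstacle to be the third relation $a_{12}^{-1}b_{23}a_{12}=a_{13}b_{23}a_{13}^{-1}$, which must be converted into the listed form $a_{12}b_{23}a_{12}^{-1}=a_{23}^{-1}a_{13}^{-1}b_{23}a_{13}a_{23}$; this is not a formal manipulation. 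I would rewrite the given relation as the assertion that $a_{12}a_{13}$ centralizes $b_{23}$, and rewrite the target as the assertion that $a_{12}^{-1}a_{23}^{-1}a_{13}^{-1}$ centralizes $b_{23}$. The bridge is the identity $a_{12}^{-1}a_{23}^{-1}a_{13}^{-1}=a_{13}^{-1}a_{12}^{-1}a_{23}^{-1}$, obtained from the first $P_3$ relation in the form $a_{23}^{-1}a_{13}^{-1}a_{23}=a_{12}a_{13}^{-1}a_{12}^{-1}$, since $a_{13}^{-1}a_{12}^{-1}$ and $a_{23}^{-1}$ centralize $b_{23}$ by the given relation and by $a_{23}b_{23}=b_{23}a_{23}$ respectively. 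With these identifications made, the twelve surviving relations reduce to exactly the eight relations listed, which completes the presentation.
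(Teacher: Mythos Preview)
Your proposal is correct and follows exactly the approach of the paper: the theorem is assembled from Lemmas~\ref{l1}--\ref{l5} via Tietze transformations, with the substitution table in the unnamed lemma supplying the rewriting of each surviving Schreier generator in terms of $a_{ij},b_{ij}$. In fact you are more explicit than the paper, which simply states Theorem~\ref{t1} as the culmination of the preceding computations without spelling out the redundancy checks; in particular, your verification that the Lemma~\ref{l3} relations are consequences of those in Lemma~\ref{l1} and that the relation $a_{12}^{-1}b_{23}a_{12}=a_{13}b_{23}a_{13}^{-1}$ from Lemma~\ref{l5} is equivalent (modulo the $P_3$ relations and $[a_{23},b_{23}]=1$) to the listed relation $a_{12}b_{23}a_{12}^{-1}=a_{23}^{-1}a_{13}^{-1}b_{23}a_{13}a_{23}$ fills a gap the paper leaves to the reader.
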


\begin{corollary} \label{c2}
From these relations follow the conjugation rules
$$
a_{12}^{-1} a_{13} a_{12} = a_{13} a_{23} a_{13} a_{23}^{-1} a_{13}^{-1},~~~a_{12}^{-1} a_{23} a_{12}=  a_{13} a_{23} a_{13}^{-1},
$$
$$
a_{12}^{-1} b_{13} a_{12} = a_{13} a_{23} b_{13} a_{23}^{-1} a_{13}^{-1},~~~a_{12}^{-1} b_{23} a_{12}=  a_{13} b_{23} a_{13}^{-1}.
$$
\end{corollary}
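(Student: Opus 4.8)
The plan is to read the statement as an inversion of information already present in Theorem~\ref{t1}. Conjugation by $a_{12}$ is an automorphism $\phi$ of $SP_3$, and the four relations
$$
a_{12} a_{13} a_{12}^{-1} = a_{23}^{-1} a_{13} a_{23}, \quad a_{12} a_{23} a_{12}^{-1} = a_{23}^{-1} a_{13}^{-1} a_{23} a_{13} a_{23},
$$
$$
a_{12} b_{13} a_{12}^{-1} = a_{23}^{-1} b_{13} a_{23}, \quad a_{12} b_{23} a_{12}^{-1} = a_{23}^{-1} a_{13}^{-1} b_{23} a_{13} a_{23}
$$
record $\phi$ on the generators $a_{13}, a_{23}, b_{13}, b_{23}$, whereas the Corollary records $\phi^{-1}$ on the same four generators. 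Since $\phi$ is a bijection, to establish an identity $a_{12}^{-1} g \, a_{12} = w$ it suffices to verify $\phi(w) = g$, that is $a_{12}\, w\, a_{12}^{-1} = g$; this reduces each of the four assertions to a short substitution using the displayed images together with the commutation relations $a_{13} b_{13} = b_{13} a_{13}$ and $a_{23} b_{23} = b_{23} a_{23}$.

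First I would isolate the one fact that makes every computation collapse: the element $c = a_{13} a_{23}$ is fixed by $\phi$. Indeed, from the first two relations
$$
a_{12}(a_{13} a_{23}) a_{12}^{-1} = (a_{23}^{-1} a_{13} a_{23})(a_{23}^{-1} a_{13}^{-1} a_{23} a_{13} a_{23}) = a_{13} a_{23},
$$
so $\phi(c) = c$. With this, three of the four right-hand sides are visibly conjugates by $c$, namely $c\, a_{13}\, c^{-1}$ for the $a_{13}$-rule, $c\, b_{13}\, c^{-1}$ for the $b_{13}$-rule, and $c\, a_{13}^{-1}$ (equal to $a_{13} a_{23} a_{13}^{-1}$) for the $a_{23}$-rule, while the $b_{23}$-rule has right-hand side $a_{13} b_{23} a_{13}^{-1}$.

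Then I would apply $\phi$ to each candidate and simplify. For instance $\phi(c\, a_{13}\, c^{-1}) = c\,\phi(a_{13})\,c^{-1} = (a_{13} a_{23})(a_{23}^{-1} a_{13} a_{23})(a_{23}^{-1} a_{13}^{-1}) = a_{13}$, giving the first rule; $\phi(c\, b_{13}\, c^{-1}) = c(a_{23}^{-1} b_{13} a_{23})c^{-1} = a_{13} b_{13} a_{13}^{-1} = b_{13}$ after invoking $a_{13} b_{13} = b_{13} a_{13}$, giving the $b_{13}$-rule; and $\phi(a_{13} b_{23} a_{13}^{-1}) = a_{23}^{-1} b_{23} a_{23} = b_{23}$ after invoking $a_{23} b_{23} = b_{23} a_{23}$, giving the $b_{23}$-rule; the $a_{23}$-rule is identical in spirit, via $\phi(c\, a_{13}^{-1}) = c\,\phi(a_{13})^{-1}\,= a_{23}$. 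There is no real obstacle here: the content is entirely an algebraic reformulation of Theorem~\ref{t1}, and the only thing to watch is careful bookkeeping of the noncommuting factors in $\phi(a_{23})$ and the deployment of the two commutation relations at the very last step of the two $b$-computations.
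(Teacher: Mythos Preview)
Your argument is correct and complete. The paper itself gives no proof for this corollary beyond the phrase ``from these relations follow the conjugation rules,'' treating it as an immediate consequence of Theorem~\ref{t1}; your verification via the observation that $c=a_{13}a_{23}$ is fixed by conjugation by $a_{12}$, followed by applying $\phi$ to each candidate right-hand side, is exactly the kind of routine check the paper is implicitly appealing to.
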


\section{Some properties of $SP_3$}

\medskip

\subsection{Conjugations by elements of $SG_3$} Since $SP_3$ is normal in $SG_3$ conjugations by elements from $SG_3$ give automorphisms of $SP_3$. The following proposition gives conjugation formulas of generators $a_{12}$, $a_{13}$, $a_{23}$, $b_{12}$, $b_{13}$, $b_{23}$ by generators of $SG_3$.

\begin{proposition} \label{p4.1}
Generators of $SG_3$ act on the generators of $SP_3$ by the rules:

-- action of $\sigma_1$:

$$
a_{12}^{\sigma_1} = a_{12},~~~a_{13}^{\sigma_1} = a_{13} a_{23} a_{13}^{-1},~~~ a_{23}^{\sigma_1} = a_{13};
$$
$$
b_{12}^{\sigma_1} = b_{12},~~~b_{13}^{{\sigma _1}} = {a_{13}}{b_{23}}a_{13}^{ - 1},~~~ b_{23}^{\sigma _1} = b_{13};
$$

-- action of $\sigma_2$:

$$
a_{12}^{\sigma_2} = a_{23}^{-1} a_{13} a_{23},~~~a_{13}^{\sigma_2} = a_{12},~~~ a_{23}^{\sigma_2} = a_{23};
$$
$$
b_{12}^{{\sigma _2}} = a_{23}^{ - 1}{b_{13}}{a_{23}},~~~b_{13}^{{\sigma _2}} = {b_{12}},~~~ b_{23}^{\sigma_2} = b_{23};
$$

-- action of $\tau_1$:

$$
a_{12}^{\tau_1} = a_{12},~~~a_{13}^{\tau_1} = b_{12}^{-1} a_{23} b_{12},~~~a_{23}^{\tau_1} = b_{12}^{-1} a_{23}^{-1} a_{13} a_{23} b_{12},
$$
$$
b_{12}^{\tau_1} = b_{12},~~~b_{13}^{{\tau _1}} = b_{12}^{ - 1}{b_{23}}{b_{12}},~~~b_{23}^{{\tau _1}} = b_{12}^{ - 1}{a_{12}}{b_{13}}a_{12}^{ - 1}{b_{12}},
$$

-- action of $\tau_2$:

$$
a_{12}^{\tau _2} = b_{23}^{ - 1}{a_{13}}{b_{23}},~~~
a_{13}^{\tau _2} = b_{23}^{ - 1}{a_{23}}{a_{12}}a_{23}^{ - 1}{b_{23}},~~~a_{23}^{\tau _2} = a_{23},
$$

$$
b_{12}^{\tau _2} = b_{23}^{ - 1}{b_{13}}{b_{23}},~~~b_{13}^{\tau _2} =  b_{23}^{-1} a_{23} b_{12} a_{23}^{ - 1}{b_{23}},~~~
b_{23}^{\tau_2} = b_{23}.
$$

\end{proposition}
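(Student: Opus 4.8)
The statement is a list of conjugation formulas, and the plan is to verify each one by an explicit computation inside $SG_3$, exploiting the fact that each generator of $SP_3$ already has a closed expression in the generators $\sigma_1,\sigma_2,\tau_1,\tau_2$ of $SG_3$ (the dictionary established in the lemma preceding Theorem~\ref{t1}). Throughout I write $x^{g}=g^{-1}xg$; since $SP_3\trianglelefteq SG_3$, conjugation by any $g\in\{\sigma_1,\sigma_2,\tau_1,\tau_2\}$ restricts to an automorphism of $SP_3$, so it suffices to identify the image of each of the six generators, and no separate check that the maps are automorphisms is needed. For a given $g$ and target $x$ I substitute the $\sigma,\tau$-expression of $x$, form $g^{-1}xg$, reduce using only the defining relations of $SG_3$---the braid relation $\sigma_1\sigma_2\sigma_1=\sigma_2\sigma_1\sigma_2$, the commutations $\sigma_1\tau_1=\tau_1\sigma_1$ and $\sigma_2\tau_2=\tau_2\sigma_2$, and the two mixed relations $\sigma_1\sigma_2\tau_1=\tau_2\sigma_1\sigma_2$, $\sigma_2\sigma_1\tau_2=\tau_1\sigma_2\sigma_1$---and finally read the reduced word back through the same dictionary into the alphabet $\{a_{ij},b_{ij}\}$.

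I would first dispose of the action of $\sigma_1,\sigma_2$ on the pure generators $a_{12},a_{13},a_{23}$. Since $a_{12}=\sigma_1^2$, $a_{13}=\sigma_2\sigma_1^2\sigma_2^{-1}$, $a_{23}=\sigma_2^2$ are precisely the standard generators of $P_3\subset B_3$, these six identities are the classical action of $B_3$ on $P_3$ recorded at the start of Section~\ref{prelim}; for example $\sigma_1^{-1}a_{13}\sigma_1=a_{23}[a_{12}^{-1},a_{13}^{-1}]$, which collapses to $a_{13}a_{23}a_{13}^{-1}$ once the $P_3$-relation $a_{12}a_{13}a_{12}^{-1}=a_{23}^{-1}a_{13}a_{23}$ of Theorem~\ref{t1} is applied. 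The action of $\sigma_1,\sigma_2$ on $b_{12},b_{13},b_{23}$ is genuinely new and is where the mixed relations enter. A representative computation is
\[
b_{23}^{\sigma_1}=\sigma_1^{-1}\sigma_2\tau_2\sigma_1=\sigma_1^{-1}\sigma_2\sigma_1\sigma_2\,\tau_1\,\sigma_2^{-1}=\sigma_2\sigma_1\tau_1\sigma_2^{-1}=b_{13},
\]
where the second equality uses $\sigma_2\tau_2=\sigma_2\sigma_1\sigma_2\tau_1\sigma_2^{-1}\sigma_1^{-1}$ (a rearrangement of $\sigma_1\sigma_2\tau_1=\tau_2\sigma_1\sigma_2$) and the third uses the braid relation. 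The cases $b_{12}^{\sigma_1}=b_{12}$ and $b_{23}^{\sigma_2}=b_{23}$ are immediate from the short commutations, and the remaining $\sigma$-on-$b$ formulas follow the same template.

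The key structural observation, which avoids grinding out the $\tau$-conjugations from scratch, is that $b_{12}=\sigma_1\tau_1$ and $b_{23}=\sigma_2\tau_2$ give $\tau_1=\sigma_1^{-1}b_{12}$ and $\tau_2=\sigma_2^{-1}b_{23}$ with $b_{12},b_{23}\in SP_3$. Hence
\[
x^{\tau_1}=\tau_1^{-1}x\tau_1=b_{12}^{-1}\,\sigma_1 x\sigma_1^{-1}\,b_{12}=\bigl(x^{\sigma_1^{-1}}\bigr)^{b_{12}},\qquad x^{\tau_2}=\bigl(x^{\sigma_2^{-1}}\bigr)^{b_{23}},
\]
so every $\tau$-action is obtained from the already-computed $\sigma^{-1}$-action (got by inverting the $\sigma$-automorphism of the previous step) followed by a single conjugation by $b_{12}$ or $b_{23}$. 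For instance, inverting the $\sigma_1$-action gives $b_{23}^{\sigma_1^{-1}}=a_{23}^{-1}b_{13}a_{23}$, whence $b_{23}^{\tau_1}=b_{12}^{-1}a_{23}^{-1}b_{13}a_{23}\,b_{12}=b_{12}^{-1}a_{12}b_{13}a_{12}^{-1}b_{12}$, the last step using the $SP_3$-relation $a_{12}b_{13}a_{12}^{-1}=a_{23}^{-1}b_{13}a_{23}$ of Theorem~\ref{t1}. All twelve $\tau$-formulas fall out this way.

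The work is thus conceptually routine, and the main obstacle is bookkeeping of two kinds. First, in the $\sigma$-on-$b$ block one must push $\tau_1,\tau_2$ past $\sigma_1,\sigma_2$ by correctly iterating the two mixed relations together with the braid relation, where an order or sign slip is easy to make. Second, inverting the $\sigma$-automorphisms and then normalizing each $\tau$-image into the exact form stated requires repeated use of the defining relations of $SP_3$ from Theorem~\ref{t1}. Since conjugation by a fixed $g$ must respect all of those relations, as well as the rules of Corollary~\ref{c2}, I would use this as a running consistency check: a transcription error in any single formula would be detected by a failed relation.
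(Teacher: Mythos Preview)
Your proposal is correct and follows essentially the same overall strategy as the paper---verify a handful of representative cases by direct computation and declare the rest routine---but the mechanics differ. For the $\sigma$-action on the $a_{ij}$ both you and the paper appeal to the classical $B_3$-action on $P_3$. For the remaining cases the paper works through the Reidemeister--Schreier rewriting process: it expresses a word such as $\tau_1^{-1}a_{13}\tau_1=\tau_1^{-1}\sigma_2\sigma_1^2\sigma_2^{-1}\tau_1$ directly as a product of the Schreier generators $S_{\lambda,a}$ and then reads off the answer via the dictionary lemma, e.g.\ $S_{\sigma_1,\tau_1}^{-1}S_{\sigma_1\sigma_2\sigma_1,\sigma_1}S_{\sigma_1,\tau_1}=b_{12}^{-1}a_{23}b_{12}$. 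You instead manipulate the defining relations of $SG_3$ by hand for the $\sigma$-on-$b$ block, and for the $\tau$-action invoke the factorisation $\tau_i=\sigma_i^{-1}b_{i,i+1}$ to obtain $x^{\tau_i}=(x^{\sigma_i^{-1}})^{b_{i,i+1}}$, reducing everything to the already-known $\sigma$-action followed by an inner automorphism of $SP_3$. This structural observation is not made in the paper; it is cleaner, explains at a glance why every $\tau$-formula has the shape $b_{12}^{-1}(\cdot)b_{12}$ or $b_{23}^{-1}(\cdot)b_{23}$, and lets the $SP_3$-relations of Theorem~\ref{t1} do the final normalisation. The paper's rewriting method, on the other hand, is entirely mechanical and requires no separate inversion of the $\sigma$-automorphism.
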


\begin{proof}
The formulas of conjugations of $a_{12}$, $a_{13}$, $a_{23}$ by $\sigma_1$ and $\sigma_2$ follow from conjugation rules in $B_3$.

Let us prove the other formulas:

$$
\tau_1^{-1} a_{13} \tau_1 = \tau_1^{-1} \sigma_2 \sigma_1 \sigma_1 \sigma_2^{-1} \tau_1 = S_{\sigma_1, \tau_1}^{-1}
S_{\sigma_1 \sigma_2 \sigma_1, \sigma_1}S_{\sigma_1, \tau_1} = b_{12}^{-1} a_{23} b_{12},
$$
$$
\tau_1^{-1} a_{23} \tau_1 = \tau_1^{-1} \sigma_2 \sigma_2  \tau_1 = S_{\sigma_1, \tau_1}^{-1}
S_{\sigma_1 \sigma_2, \sigma_2} S_{\sigma_1, \tau_1} = b_{12}^{-1} a_{12} a_{13} a_{12}^{-1} b_{12} = b_{12}^{-1} a_{23}^{-1} a_{13} a_{23} b_{12},
$$

\[\begin{gathered}
  \tau _2^{ - 1}{a_{13}}{\tau _2} = \tau _2^{ - 1}{\sigma _2}{\sigma _1}{\sigma _1}\sigma _2^{ - 1}{\tau _2} = S_{{\sigma _2},{\tau _2}}^{ - 1}{S_{{\sigma _1}{\sigma _2}{\sigma _1},{\sigma _1}}}{S_{{\sigma _1}{\sigma _2}{\sigma _1},{\sigma _2}}}S_{{\sigma _1}{\sigma _2}{\sigma _1},{\sigma _1}}^{ - 1}{S_{{\sigma _2},{\tau _2}}} =  \hfill \\
  \,\,\,\,\,\,\,\,\,\,\,\,\,\,\,\,\,\,\,\,\, = S_{{\sigma _2},{\tau _2}}^{ - 1}{S_{{\sigma _2},{\sigma _2}}}{S_{{\sigma _1},{\sigma _1}}}S_{{\sigma _2},{\sigma _2}}^{ - 1}{S_{{\sigma _2},{\tau _2}}} = b_{23}^{ - 1}{a_{23}}{a_{12}}a_{23}^{ - 1}{b_{23}}, \hfill \\
\end{gathered} \]

\[\tau _2^{ - 1}{b_{12}}{\tau _2} = \tau _2^{ - 1}{\sigma _1}{\tau _1}{\tau _2} = S_{{\sigma _2},{\tau _2}}^{ - 1}{S_{{\sigma _2}{\sigma _1},{\tau _1}}}{S_{{\sigma _2},{\tau _2}}} = b_{23}^{ - 1}{b_{13}}{b_{23}},\]
\[\begin{gathered}
  \tau _2^{ - 1}{b_{13}}{\tau _2} = \tau _2^{ - 1}{\sigma _2}{\sigma _1}{\tau _1}\sigma _2^{ - 1}{\tau _2} = S_{{\sigma _2},{\tau _2}}^{ - 1}{S_{{\sigma _1}{\sigma _2}{\sigma _1},{\sigma _1}}}{S_{{\sigma _1},{\tau _1}}}S_{{\sigma _1}{\sigma _2}{\sigma _1},{\sigma _1}}^{ - 1}{S_{{\sigma _2},{\tau _2}}} =  \hfill \\
  \,\,\,\,\,\,\,\,\,\,\,\,\,\, = S_{{\sigma _2},{\tau _2}}^{ - 1}{S_{{\sigma _2},{\sigma _2}}}{S_{{\sigma _1},{\tau _1}}}S_{{\sigma _2},{\sigma _2}}^{ - 1}{S_{{\sigma _2},{\tau _2}}} = b_{23}^{ - 1}{a_{23}}{b_{12}}a_{23}^{ - 1}{b_{23}}, \hfill \\
\end{gathered} \]

\end{proof}

\subsection{Decomposition  of $SP_3$} Define the following subgroups of $SP_3$:
$$
V_2 = SP_2 = \langle a_{12}, b_{12} \rangle,~~~V_3 = \langle a_{13}, a_{23}, b_{13}, b_{23} \rangle,~~~\widetilde{V}_3 = \langle V_3, b_{12} \rangle.
$$

As was proved in Lemma \ref{l3.1}
$$
 V_2 = SP_2 =  \langle a_{12}, b_{12}~||~ a_{12} b_{12} = b_{12} a_{12} \rangle \cong \mathbb{Z} \times \mathbb{Z}.
$$

There is  a homomorphism $SP_3 \to V_2$ that sends the generators $a_{13}, a_{23}, b_{13}, b_{23}$ to 1 and keeps the generators $a_{12}, b_{12}$. It is easily to see that the image of this homomorphism is $SP_2$ and the kernel is the normal closure of $V_3$ in  $SP_3$.

The following theorem describes a structure of $SP_3$.

\begin{theorem}  \label{th}
1) $SP_3 = \widetilde{V}_3 \leftthreetimes \mathbb{Z}$, where $\mathbb{Z} = \langle a_{12} \rangle$;

2) $\widetilde{V}_3$ has a presentation
$$
\widetilde{V}_3 =  \langle a_{13}, a_{23}, b_{13}, b_{23}, b_{12}~||~ [a_{13}, b_{13}] = [a_{23}, b_{23}] = [a_{13} a_{23}, b_{12}]= 1  \rangle
$$
and is an HNN-extension with base group $V_3$, stable letter $b_{12}$ and associated  subgroups $A \cong B = \langle a_{13} a_{23} \rangle$ and identity isomorphism $A \to B$:
$$
\widetilde{V}_3 = \langle V_3, b_{12}~|~rel(V_3),~~~b_{12}^{-1} (a_{13} a_{23}) b_{12} = a_{13} a_{23} \rangle,
$$
where $rel(V_3)$ is the set of relations in $V_3$.

3) $V_3 = \langle a_{13},  a_{23}, b_{13},  b_{23} ~||~[a_{13}, b_{13}] = [a_{23}, b_{23}] = 1 \rangle \cong \mathbb{Z}^2 * \mathbb{Z}^2$.
\end{theorem}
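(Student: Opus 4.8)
The plan is to realize the presentation of $SP_3$ given in Theorem \ref{t1} as the presentation of a semidirect product $H \rtimes_\alpha \mathbb{Z}$, where $H$ is the group defined by the presentation asserted in part 2) and $\alpha$ is the automorphism induced by conjugation by $a_{12}$. Writing $c = a_{13}a_{23}$, I set
\[
H = \langle a_{13}, a_{23}, b_{13}, b_{23}, b_{12} \mid [a_{13},b_{13}] = [a_{23},b_{23}] = [c,b_{12}] = 1 \rangle,
\]
and define $\alpha\colon H \to H$ on generators by the conjugation formulas read off from Theorem \ref{t1}, namely $a_{13}\mapsto a_{23}^{-1}a_{13}a_{23}$ and $b_{13}\mapsto a_{23}^{-1}b_{13}a_{23}$ (conjugation of the pair $(a_{13},b_{13})$ by $a_{23}$), $a_{23}\mapsto c^{-1}a_{23}c$ and $b_{23}\mapsto c^{-1}b_{23}c$ (conjugation of $(a_{23},b_{23})$ by $c$), and $b_{12}\mapsto b_{12}$.

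First I would check that $\alpha$ is a well-defined automorphism of $H$. The endomorphism property is short: a one-line cancellation gives $\alpha(c)=\alpha(a_{13})\alpha(a_{23}) = c$, whence $\alpha([c,b_{12}]) = [c,b_{12}]$; and $\alpha([a_{13},b_{13}]) = a_{23}^{-1}[a_{13},b_{13}]a_{23}$, $\alpha([a_{23},b_{23}]) = c^{-1}[a_{23},b_{23}]c$, so each defining relator maps to a conjugate of itself and is preserved. Invertibility follows by exhibiting the inverse endomorphism $\beta$ built from the conjugation-by-$a_{12}^{-1}$ formulas of Corollary \ref{c2} (which likewise fixes $c$ and $b_{12}$, conjugating $(a_{13},b_{13})$ by $c$ and $(a_{23},b_{23})$ by $a_{13}^{-1}$) and verifying $\alpha\beta = \beta\alpha = \mathrm{id}$ on generators. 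With $\alpha\in\Aut(H)$, form $H\rtimes_\alpha\mathbb{Z}$ with stable letter $t$: its standard presentation has the three relators of $H$ together with the five relators $txt^{-1}=\alpha(x)$, and upon substituting $t=a_{12}$ these become term-by-term identical to the eight relations of Theorem \ref{t1}. Hence the generator-preserving map is an isomorphism $SP_3\cong H\rtimes_\alpha\mathbb{Z}$. Since the normal factor of any semidirect product embeds, $H$ maps isomorphically onto $\widetilde V_3 = \langle a_{13},a_{23},b_{13},b_{23},b_{12}\rangle$ and the cyclic factor onto $\langle a_{12}\rangle$, which is therefore infinite cyclic with $\widetilde V_3\cap\langle a_{12}\rangle = 1$. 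This gives part 1) and the presentation of $\widetilde V_3$ in part 2). (As an independent check that $\langle a_{12}\rangle\cong\mathbb{Z}$, one may use the homomorphism $SP_3\to\mathbb{Z}$ sending $a_{12}\mapsto 1$ and all other generators to $0$, which respects the relations of Theorem \ref{t1} and has kernel exactly $\widetilde V_3$.)

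It then remains to analyze $H$ abstractly. Setting $K = \langle a_{13},a_{23},b_{13},b_{23}\mid [a_{13},b_{13}] = [a_{23},b_{23}] = 1\rangle$, I observe that the two commutator relators involve disjoint generator sets, so $K$ is by definition the free product $\langle a_{13},b_{13}\rangle * \langle a_{23},b_{23}\rangle \cong \mathbb{Z}^2 * \mathbb{Z}^2$. In $K$ the element $c=a_{13}a_{23}$ is a cyclically reduced word of free-product length $2$ with both syllables nontrivial, hence has infinite order, so $\langle c\rangle\cong\mathbb{Z}$ and the identity map between the two copies of $\langle c\rangle$ is a legitimate isomorphism of associated subgroups. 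Thus the HNN extension $\langle K, t\mid t^{-1}c\,t = c\rangle$ is well defined, and its presentation is literally that of $H$ with $t=b_{12}$. By Britton's lemma the base group $K$ embeds, its image being exactly $V_3 = \langle a_{13},a_{23},b_{13},b_{23}\rangle\le\widetilde V_3$; therefore $V_3\cong\mathbb{Z}^2 * \mathbb{Z}^2$ (part 3)), and $\widetilde V_3\cong H$ is the asserted HNN extension with base $V_3$, stable letter $b_{12}$, and associated subgroups $\langle c\rangle$ (completing part 2)).

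The main obstacle I anticipate is the verification that $\alpha$ is a genuine automorphism, rather than merely an endomorphism, of the abstract group $H$: everything downstream — the isomorphism $SP_3\cong H\rtimes_\alpha\mathbb{Z}$ and the resulting embeddings of $\widetilde V_3$ and $V_3$ — depends on being able to form the semidirect product. The computation is mercifully short once one notices the identity $\alpha(c)=c$ and the pair-conjugation structure, so the essential point is really the structural recognition that the presentation of $SP_3$ is that of an HNN extension of $\mathbb{Z}^2 * \mathbb{Z}^2$ further extended by $\mathbb{Z}$; granting this, parts 1)–3) follow from standard facts about semidirect products, free products, and HNN extensions.
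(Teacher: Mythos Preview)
Your proposal is correct and follows the same overall architecture as the paper: decompose $SP_3$ as a semidirect product with kernel $\widetilde{V}_3$ and quotient $\langle a_{12}\rangle$, identify the presentation of $\widetilde{V}_3$, recognize it as an HNN extension, and conclude that the base $V_3\cong\mathbb{Z}^2*\mathbb{Z}^2$ embeds.

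The one methodological difference is in how the presentation of $\widetilde{V}_3$ is obtained. The paper argues top-down: it first observes $\widetilde{V}_3\trianglelefteq SP_3$ from the relations, takes the retraction $\varphi\colon SP_3\to\langle a_{12}\rangle$, and then applies the Reidemeister--Schreier method with coset representatives $\{a_{12}^k\}$ to read off relations for the kernel, checking (via Theorem~\ref{t1} and Corollary~\ref{c2}) that all conjugated relators reduce to the three commutator relations. You instead work bottom-up: build the abstract group $H$ with the target presentation, verify that the conjugation formulas define an automorphism $\alpha\in\Aut(H)$ (using that $\alpha$ fixes $c=a_{13}a_{23}$ and has the explicit inverse $\beta$ from Corollary~\ref{c2}), and then match the standard presentation of $H\rtimes_\alpha\mathbb{Z}$ relator-by-relator with Theorem~\ref{t1}. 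Your route avoids a second Reidemeister--Schreier computation and makes the embedding $\widetilde{V}_3\hookrightarrow SP_3$ and the freeness of $\langle a_{12}\rangle$ immediate from the semidirect product structure; the paper's route is more systematic but leaves the reduction ``all conjugated relators follow from the three given ones'' as an implicit check. One small cosmetic point: your parenthetical description of $\beta$ as conjugation of $(a_{13},b_{13})$ ``by $c$'' has the direction reversed relative to your convention for $\alpha$ (Corollary~\ref{c2} gives $a_{12}^{-1}a_{13}a_{12}=c\,a_{13}\,c^{-1}$), but since you anchor $\beta$ to Corollary~\ref{c2} this does not affect the argument.
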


\begin{proof}
1) From the defining relations of $SP_3$ follows that $\widetilde{V}_3$ is normal in $SP_3$ and $\langle a_{12} \rangle$ acts on this subgroup by automorphisms.
Since $SP_3 = \widetilde{V}_3 \cdot \langle a_{12} \rangle$ and $\widetilde{V}_3 \cap \langle a_{12} \rangle = 1$, we have the desired decomposition.

2) To find a presentation of $\widetilde{V}_3$ define an endomorphism $\varphi : SP_3 \to \langle a_{12} \rangle$ that sends the generators $b_{12}, a_{13}, a_{23}, b_{13}, b_{23}$ to 1 and sends the generator $a_{12}$ to itself. To find $\mathrm{Ker}\varphi$ use the Reidemeister-Shraier method. The kernel
is the normal closure of $\widetilde{V}_3$ in $SP_3$, but since $\widetilde{V}_3$ is normal in $SP_3$, then $\mathrm{Ker}\varphi = \widetilde{V}_3$. To find defining relations of $\widetilde{V}_3$ take the set $\Lambda = \{ a_{12}^k ~|~k \in \mathbb{Z} \}$ as coset representatives of $\widetilde{V}_3$ in $SP_3$. Then $\widetilde{V}_3$ is defined by relations $\lambda r \lambda^{-1} = 1$, where $r=1$ is a relation in $SP_3$ and $\lambda \in \Lambda$. From Theorem \ref{t1} and Corollary \ref{c2} follows that all these relations follow from relations
$$
[a_{13}, b_{13}] = [a_{23}, b_{23}] = [a_{13} a_{23}, b_{12}]= 1.
$$
The decomposition in HNN-extension follows from the definition.

3) From the properties of HNN-extension follows that the base group is embedded in the HNN-extension.

\end{proof}

\subsection{The center of  $SG_n$ }\label{center}

It is well-known that the center $Z(B_n) = Z(P_n)$ is infinite cyclic group that is generated by
$$
\delta_n = (\sigma_1 \sigma_2 \ldots \sigma_{n-1})^n = a_{12} (a_{13} a_{23}) \ldots (a_{1n} a_{2n} \ldots a_{n-1,n}).
$$

It was shown that  $Z(B_n) \cong Z(SG_n)$ (see \cite{FRZ,V4}). On the other side, M.~V.~Neshchadim \cite{N1,N2} proved that $Z(P_n)$ is a direct factor in $P_n$.

\begin{question}
Is it true that $Z(SG_n)$ is a direct factor in $SP_n$?
\end{question}

We will prove that it is true for $n = 3$. To do it denote $\delta = \delta_3 = a_{12} a_{13} a_{23}$. If we remove the generator $a_{12} = \delta a_{23}^{-1} a_{13}^{-1}$, then we get the following presentation of $SP_3$.

\begin{theorem} \label{t3}
The singular pure braid group $SP_3$ is generated by elements
$$
\delta,~~a_{13},~~a_{23},~~b_{12},~~b_{13},~~b_{23},
$$
and is defined by relations:
$$
\delta b_{12} = b_{12} \delta,~~~\delta a_{13} = a_{13} \delta,~~~\delta a_{23} = a_{23} \delta,~~~\delta b_{13} = b_{13} \delta,~~~
\delta b_{23} = b_{23} \delta,
$$
-- what are relations of commutativity with $\delta$;

$$
a_{13} b_{13} = b_{13} a_{13},~~~
a_{23} b_{23} = b_{23} a_{23},~~~
b_{12} (a_{13} a_{23}) b_{12}^{-1} = a_{13} a_{23},
$$
-- what are relations in  $\widetilde{V}_3$.
\end{theorem}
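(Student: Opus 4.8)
The plan is to obtain the new presentation from the one in Theorem~\ref{t1} by a Tietze transformation that replaces the generator $a_{12}$ with the central element $\delta = a_{12} a_{13} a_{23}$. First I would introduce $\delta$ as a new generator together with the defining relation $\delta = a_{12} a_{13} a_{23}$, which lets me solve for the old generator as $a_{12} = \delta a_{23}^{-1} a_{13}^{-1}$. Since the relation expresses $a_{12}$ explicitly in terms of the remaining generators, Tietze's theorem permits me to delete $a_{12}$ from the generating set, substituting $\delta a_{23}^{-1} a_{13}^{-1}$ for every occurrence of $a_{12}$ in the relations of Theorem~\ref{t1}.

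Next I would carry out this substitution in each of the eight relations of Theorem~\ref{t1} and simplify. The relations $a_{13} b_{13} = b_{13} a_{13}$, $a_{23} b_{23} = b_{23} a_{23}$, and $b_{12}(a_{13} a_{23}) b_{12}^{-1} = a_{13} a_{23}$ do not involve $a_{12}$ and so survive unchanged; these are precisely the relations of $\widetilde{V}_3$. The relation $a_{12} b_{12} = b_{12} a_{12}$ becomes, after substitution, a statement that $\delta a_{23}^{-1} a_{13}^{-1}$ commutes with $b_{12}$; using the already-present relation $[a_{13} a_{23}, b_{12}] = 1$, this collapses to $\delta b_{12} = b_{12} \delta$. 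The two $P_3$-relations and the two mixed relations $a_{12} b_{13} a_{12}^{-1} = a_{23}^{-1} b_{13} a_{23}$ and $a_{12} b_{23} a_{12}^{-1} = a_{23}^{-1} a_{13}^{-1} b_{23} a_{13} a_{23}$ should, after substituting for $a_{12}$ and using the centrality that $\delta$ will be shown to have, reduce to the commuting relations $\delta a_{13} = a_{13} \delta$, $\delta a_{23} = a_{23} \delta$, $\delta b_{13} = b_{13} \delta$, and $\delta b_{23} = b_{23} \delta$.

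The main obstacle I anticipate is verifying that each substituted relation simplifies exactly to the claimed commutativity of $\delta$ with a generator, rather than to some more complicated relation; this requires carefully tracking the conjugation rules. In fact the cleanest way to organize this is to recall that $\delta = \delta_3$ is the generator of the center $Z(SP_3) = Z(SG_3) = Z(P_3)$ noted in Section~\ref{center}, so $\delta$ is genuinely central in $SP_3$; hence every relation of the form $\delta g = g \delta$ is valid, and the real task is to confirm that the \emph{old} relations are \emph{implied by} the new relations listed (commutativity with $\delta$ together with the $\widetilde{V}_3$-relations). I would check this implication by reversing the substitution: from the new relations one recovers $a_{12} = \delta a_{23}^{-1} a_{13}^{-1}$ and then derives each of the eight relations of Theorem~\ref{t1}, using that $\delta$ commutes with everything to move the $\delta$-factor freely. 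Because the set of relations in both presentations is finite and the Tietze moves are reversible, this establishes the isomorphism and completes the proof.
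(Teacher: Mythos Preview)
Your proposal is correct and follows exactly the approach the paper indicates: the paper's entire ``proof'' is the single remark preceding the theorem, namely that one removes the generator $a_{12} = \delta a_{23}^{-1} a_{13}^{-1}$ from the presentation of Theorem~\ref{t1}, which is precisely the Tietze transformation you describe. Your write-up is in fact considerably more detailed than the paper's; the only point to tighten is the momentary circularity where you speak of ``using the centrality that $\delta$ will be shown to have'' while deriving that very centrality---but you immediately fix this by either invoking the known fact $Z(SG_3)=\langle\delta\rangle$ from Section~\ref{center} or, more self-containedly, by directly checking that the substituted $P_3$-relations and mixed relations are jointly equivalent (modulo the $\widetilde{V}_3$-relations) to the commutator relations $[\delta,a_{13}]=[\delta,a_{23}]=[\delta,b_{13}]=[\delta,b_{23}]=1$.
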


From this representation we get

\begin{corollary} \label{c3}
$SP_3$ is the direct product
$$
SP_3 = Z \times \widetilde{V}_3,
$$
where $Z = \langle \delta \rangle$ is the center of $SP_3$ and
$
\widetilde{V}_3 = \langle a_{13}, a_{23}, b_{12}, b_{13}, b_{23}  \rangle.
$
\end{corollary}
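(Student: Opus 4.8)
The plan is to read the direct-product decomposition directly off the presentation furnished by Theorem~\ref{t3}, which is exactly the ``representation'' alluded to before the corollary. First I would partition the generating set $\{\delta, a_{13}, a_{23}, b_{12}, b_{13}, b_{23}\}$ into the singleton $\{\delta\}$ and the set $Y = \{a_{13}, a_{23}, b_{12}, b_{13}, b_{23}\}$, and observe that the defining relations of Theorem~\ref{t3} split into two disjoint blocks. The first block consists of the five relations $\delta b_{12} = b_{12}\delta$, $\delta a_{13} = a_{13}\delta$, $\delta a_{23} = a_{23}\delta$, $\delta b_{13} = b_{13}\delta$, $\delta b_{23} = b_{23}\delta$, which assert precisely that $\delta$ commutes with every generator in $Y$. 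The second block consists of the three relations $a_{13} b_{13} = b_{13} a_{13}$, $a_{23} b_{23} = b_{23} a_{23}$, $b_{12}(a_{13} a_{23}) b_{12}^{-1} = a_{13} a_{23}$, which involve only the generators in $Y$ and coincide with the defining relations $rel(\widetilde{V}_3)$ recorded in part~2) of Theorem~\ref{th}.

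Next I would invoke the standard fact about presentations of direct products: a group presented by $\langle X \sqcup Y \mid R_X \cup R_Y \cup \{[x,y] = 1 : x \in X,\, y \in Y\}\rangle$ is isomorphic to $\langle X \mid R_X\rangle \times \langle Y \mid R_Y\rangle$. Here $X = \{\delta\}$ with $R_X = \varnothing$, so $\langle X \mid R_X\rangle = \langle\delta\rangle \cong \mathbb{Z}$, while $\langle Y \mid R_Y\rangle$ is exactly the presentation of $\widetilde{V}_3$. Applying this fact to the presentation of Theorem~\ref{t3} yields $SP_3 \cong \langle\delta\rangle \times \widetilde{V}_3$, with the two factors realized as the subgroups generated by $\delta$ and by $Y$ respectively.

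Finally I would identify the factor $\langle\delta\rangle$ with the center. The commutation relations of the first block show $\langle\delta\rangle \subseteq Z(SP_3)$, and $\delta = \delta_3 = a_{12} a_{13} a_{23}$ has infinite order since it generates the infinite cyclic center of $P_3 \subseteq SP_3$, so $\langle\delta\rangle \cong \mathbb{Z}$ is a nontrivial factor. Combined with the fact recalled in Subsection~\ref{center} that $Z(SG_3) = Z(SP_3)$ is infinite cyclic generated by $\delta_3$, this gives $\langle\delta\rangle = Z(SP_3)$, completing the decomposition $SP_3 = Z \times \widetilde{V}_3$ with $Z = \langle\delta\rangle$.

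I expect the one point requiring genuine care to be confirming that the presentation of Theorem~\ref{t3} is truly in the separated normal form demanded by the direct-product lemma: that every relation mixing $\delta$ with $Y$ is plain commutation (nothing more elaborate survives), and symmetrically that the three $Y$-relations are exactly $rel(\widetilde{V}_3)$ and introduce no hidden dependence on $\delta$. Once the presentation is certified to have this shape the conclusion is immediate, and the triviality of the intersection $Z \cap \widetilde{V}_3$ and the equality $SP_3 = Z\cdot\widetilde{V}_3$ are automatic consequences of the direct-product lemma rather than separate computations.
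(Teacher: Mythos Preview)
Your proposal is correct and is exactly the argument the paper has in mind: the paper offers no proof beyond the phrase ``From this representation we get'', meaning the corollary is read off the presentation of Theorem~\ref{t3}, and you have simply made explicit the standard direct-product presentation lemma that justifies this reading. Your identification of the second block of relations with $rel(\widetilde{V}_3)$ from Theorem~\ref{th}\,(2) and of $\langle\delta\rangle$ with the center via the facts recalled in Subsection~\ref{center} are the only details worth checking, and both are handled correctly.
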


\emph{Acknowledgements. }The first and second named authors acknowledge the support from the Russian Science Foundation (project No. 19-41-02005).

\end{document}